\documentclass{amsart}
\usepackage[latin1]{inputenc}
\usepackage{amssymb}
\usepackage{latexsym}
\usepackage{amscd}
\usepackage{longtable}
\usepackage{amsmath}
\usepackage{mathrsfs}
\usepackage{array}
\usepackage[all]{xy}
\usepackage{a4}

\newtheorem{defn0}{Definition}[section]
\newtheorem{prop0}[defn0]{Proposition}
\newtheorem{thm0}[defn0]{Theorem}
\newtheorem{lemma0}[defn0]{Lemma}
\newtheorem{claim0}[defn0]{Claim}
\newtheorem{corollary0}[defn0]{Corollary}
\newtheorem{example0}[defn0]{Example}
\newtheorem{remark0}[defn0]{Remark}
\newtheorem{assumption0}[defn0]{Assumption}
\newtheorem{conjecture0}[defn0]{Conjecture}
\newtheorem{notation0}[defn0]{Notation}
\newtheorem{question0}[defn0]{Question}

\newenvironment{definition}{\begin{defn0}\rm}{\end{defn0}}
\newenvironment{proposition}{\begin{prop0}}{\end{prop0}}

\newenvironment{lemma}{\begin{lemma0}}{\end{lemma0}}

\newenvironment{remark}{\begin{remark0}\rm}{\end{remark0}}

\newenvironment{conjecture}{\begin{conjecture0}}{\end{conjecture0}}

\def \mint {\times \hskip -1.1em \int}

\newcommand{\Gal}{{\mathrm {Gal}}}

\newcommand{\ord}{\mathrm{ord}}

\newcommand{\Jac}{\mathrm{Jac}}

\newcommand{\PGL}{{\mathrm{PGL}}}

\newcommand{\GL}{{\mathrm{GL}}}

\newcommand{\Z}{{\mathbb Z}}
\newcommand{\A}{{\mathbb A}}
\newcommand{\Q}{{\mathbb Q}}
\newcommand{\C}{{\mathbb C}}
\newcommand{\R}{{\mathbb R}}

\newcommand{\aH}{{\mathbb H}}

\newcommand{\PP}{{\mathbb P}}

\newcommand{\cA}{{\mathcal A}}

\newcommand{\cC}{{\mathcal C}}
\newcommand{\cF}{{\mathcal F}}

\newcommand{\cH}{{\mathcal H}}

\newcommand{\cG}{{\mathcal G}}
\newcommand{\cV}{{\mathcal V}}

\newcommand{\cU}{{\mathcal U}}
\newcommand{\cE}{{\mathcal E}}

\newcommand{\cO}{{\mathcal O}}

\newcommand{\Div}{{\mathrm {Div}}}
\newcommand{\Coind}{{\mathrm {Coind}}}

\newcommand{\ra}{{\rightarrow}}
\newcommand{\lra}{\longrightarrow}
\newcommand{\Hom}{{\mathrm {Hom}}}

\newcommand{\SigmaB}{\Sigma_B}

\newcommand{\St}{\mathrm{St}}
\newcommand{\ev}{\operatorname{ev}}

\newcommand{\smtx}[4]{\left(\begin{smallmatrix}#1&#2\\#3&#4\end{smallmatrix}\right)}

\def\Xint#1{\mathchoice
{\XXint\displaystyle\textstyle{#1}}%
{\XXint\textstyle\scriptstyle{#1}}%
{\XXint\scriptstyle\scriptscriptstyle{#1}}%
{\XXint\scriptscriptstyle\scriptscriptstyle{#1}}%
\!\int}
\def\XXint#1#2#3{{\setbox0=\hbox{$#1{#2#3}{\int}$}
\vcenter{\hbox{$#2#3$}}\kern-.5\wd0}}

\renewcommand{\setminus}{\smallsetminus}
\newcommand{\ns}{\text{ns}}

\title{An automorphic approach to Darmon points}

\author{Xavier Guitart}
\address{Departament de Matematiques i Informatica\\
Universitat de Barcelona\\
Catalunya
}
\email{xevi.guitart@gmail.com}
\thanks{}

\author{Marc Masdeu}
\address{Departament de Matematiques \\
Universitat Autonoma de Barcelona \\
Catalunya}
\email{masdeu@mat.uab.cat}
\thanks{}

\author{Santiago Molina}
\address{Centre de Recerca Matematica\\
Campus de Bellaterra, Edifici C\\
Catalunya
}
\email{smolina@crm.cat}
\urladdr{}
\thanks{}

\begin{document}

\maketitle

\begin{abstract}
  We give archimedean and non-archimedean constructions of Darmon points on modular abelian varieties attached to automorphic forms over arbitrary number fields and possibly non-trivial central character. An effort is made to present a unifying point of view, emphasizing the automorphic nature of the construction.
\end{abstract}

\section{Introduction}
Let $F$ be a number field and let $\Pi$ be a cuspidal automorphic representation of $\GL_2(\A_F)$ of parallel weight $2$. We denote by $\omega_\Pi$ the central character of $\Pi$, by $L_\Pi$ the field of definition, and by $N$ the level of the associated newform. In general it is expected that there exists a simple abelian variety $A_\Pi/F$ of dimension $[L_\Pi\colon \Q]$ determined, up to isogeny, by the equality of $L$-series
\begin{align*}
  L(A_\Pi, s) = \prod_{\Pi^\sigma}L(\Pi^\sigma,s-1/2),
\end{align*}
where $\Pi^\sigma$ runs over the Galois conjugates of $\Pi$ (cf. \cite[Conjecture 3]{Taylor}). This is indeed known in many circumstances when $F$ is totally real. If $F$ is totally complex, \cite[Conjecture 3]{Taylor} also allows the possibility that $A_\Pi$ has dimension $2[L_\Pi\colon \Q]$; as we will see in Remark \ref{remark:A_Pi} below, throughout the article we will assume certain conditions which, together with the expected properties of $A_\Pi$, rule out this possibility. Let also $K/F$ be a quadratic extension. If $F$ is totally real and $K$ is CM, the theory of Heegner points provides a canonical construction of algebraic points on $A_\Pi$, which are defined over abelian extensions of $K$ and whose height is controlled by the formulas of Gross--Zagier and Yuan--Zhang--Zhang. 

Darmon points are a collection of conjectural generalizations of the Heegner point construction to the case where $K/F$ is a quadratic extension which is not CM. In all of these constructions, the definition of the points is of a local nature. Namely, one fixes a place $v$ of $F$, which can be archimedean or non-archimedean, with the property that it does not split in $K$. One needs to assume as well that $A_\Pi$ admits a $v$-adic uniformization, which is automatic  if $v$ is archimedean. The points are then constructed as local points living (at least conjecturally) in $A_\Pi(K_v)$, by a method that resembles and generalizes the local formulas for Heegner points as (complex or $p$-adic) integrals. The resulting local points are then conjectured to actually lie on $A_\Pi(K^{\textrm{ab}})$ and to satisfy properties akin to those of Heegner points, such as Shimura's reciprocity law or a formula of Gross--Zagier type.

The original and seminal constructions, both non-archimedean and archimedean, were introduced by Darmon under certain additional restrictions on the fields and automorphic forms considered.  The $p$-adic construction of \cite{Dar-int} treats the case where $F=\Q$, the quadratic extension $K/\Q$ is real, $\dim (A_\Pi)=1$, and $N$ satisfies an analogue of the Heegner condition with respect to $K$. The archimedean construction of \cite[Chapter 9]{Dar-rpmec} assumes that $F$ is real quadratic of class number one, $K$ has exactly one complex place, and $N$ satisfies a Heegner condition with respect to $K$.

Over the years, several authors have subsequently relaxed these restrictions by adapting Darmon's original constructions to more general settings. Most relevantly to the framework of this article, Dasgupta \cite{Dasgupta} lifted Darmon's $p$-adic points to modular Jacobians, Greenberg \cite{Gr} generalized the $p$-adic construction to the setting where $F$ is totally real of narrow class number one and $K/F$ is any quadratic extension (and then $N$ satisfies a generalized Heegner condition);  Gärtner \cite{Ga-art} relaxed the restrictions in the archimedean case by allowing an arbitrary totally real base field $F$ (with no restrictions on the narrow class number) and $K/F$ any quadratic extension (again with $N$ satisfying a generalized Heegner condition);  these constructions were extended in \cite{GMS} to the case where $F$ is of arbitrary signature and of narrow class number one.  While in all of these cases the central character $\omega_\Pi$ was assumed to be trivial (and so the relevant congruence subgroup is of the type $\Gamma_0$) the constructions of Kohen--Pacetti and of Masdeu appearing in~\cite{KPM} use the non-split Cartan subgroup to define Darmon points on elliptic curves with non-squarefree conductor. Finally, it is also worth pointing out that Darmon's constructions have been generalized in other directions; for example,  Rotger--Seveso \cite{Rotger-Seveso} deal with cycles in motives attached to higher weight modular forms.

The aim of the present note is to provide a general construction of Darmon-like points, both archimedean and non-archimedean, which works for any base field $F$ (with no restrictions on the narrow class number), arbitrary quadratic extensions $K/F$ (with $N$ satisfying a generalized Heegner condition), and non-trivial central characters $\omega_\Pi$. In particular, we provide the first constructions of Darmon points which allow an arbitrary central character, and in the $p$-adic case also the first one that allows base fields of narrow class number greater than one.

In dealing with  base fields of arbitrary class number, the adelic formulation of automorphic forms is more convenient. This was already the case in Gartner's work, but in our constructions we rather follow the group cohomology-based formalism and techniques introduced by Spiess \cite{Spiess} in his study of automorphic $L$-invariants. 

In fact, such automorphic $L$-invariants already play a role in the non-archimedean $v$-adic Darmon points, and they were also initially introduced in \cite{Dar-int}. The relation stems from the fact that Darmon points are initially defined as points on a certain $v$-adic torus, whose isogeny class is given by an automorphic $L$-invariant of $\Pi$. It is generally expected, and in some cases proven (cf. \cite{lennart-L-invariants} for the latest and more general results in this direction), that this torus is isogenous to the $v$-adic uniformization of $A_\Pi$, and this is what allows the points to be regarded as lying in $A_\Pi(K_v)$. 

The versatility and generality of Spiess's approach is thus well suited for giving a non-archimedean construction of Darmon points in the above-mentioned generality of arbitrary class number base fields and arbitrary central characters. Our construction of archimedean Darmon points is also inspired by Spiess's point of view, and it can be seen as an adaptation of his methods to the case where the distinguished local place $v$ is archimedean.

In Section \ref{section: automorphic forms}, in addition to presenting the relevant notions and notations from automorphic forms, we introduce two maps that will play a key role in the construction of the Darmon points of this article: the connection and evaluation morphisms, which we present first in the archimedean setting and then in the non-archimedean setting. In Section \ref{section: oda} we use these morphisms to construct certain tori (both complex and $p$-adic) which are conjectured to be isogenous to the corresponding local uniformizations of $A_\Pi$. It is in Section \ref{D-P} where we give the construction of Darmon points which, taking advantage of the adelic language, we give in a unified presentation that encompasses both the archimedean and non-archimedean cases. Finally, in the Appendix we provide a detailed account of the general construction in the particular case where $K/F$ is CM, with the aim of checking that our construction does particularize to the known case of Heegner points in the CM setting; this can also be helpful to the reader, as it provides motivation for some of the general constructions made throughout the article. 

\subsection*{Acknowledgments} We thank Lennart Gehrmann for helpful discussions during the course of this work. Guitart was partially supported by projects MTM2015-66716-P and MTM2015-63829-P. Masdeu was partially supported by project MSC--IF--H2020--ExplicitDarmonProg. This project has received funding from the European Research Council (ERC) under the European Union's Horizon 2020 research and innovation programme (grant agreement No 682152).

\section{Automorphic Forms and Group Cohomology}\label{section: automorphic forms}

In this section we introduce the notation that will be used in the rest of the note. We show how the automorphic representations of interest arise in certain cohomological degrees. This will be crucially used in the remaining sections. Although that notationally the construction is the same in the archimedean and non-archimedean cases, the details have to be worked out separately, as can be seen in the different subsections.

\subsection{Set up} Let $F$ be an arbitrary number field and let $\A_F$ be the adeles of $F$. 
We also denote by $\infty$ the set of archimedean places of $F$.
Let $B$ be a quaternion algebra over $F$ and $G$ be the algebraic group $B^\times$. Let $\SigmaB$ be the set of archimedean places where $B$ splits.  Write $G(F)^+\subset G(F)$ for the subgroup of elements of totally positive norm, namely, the subset of elements in $G(F)$ whose image through the natural embedding in $G(F_\sigma)$, for any real place $\sigma\in\Sigma_B$, has positive norm. For every $\sigma\in\Sigma_B$ we fix $K_\sigma$, a maximal compact subgroup of $G(F_\sigma)$.

Let $S$ be a finite set of places containing $\SigmaB$. We put
$$F_S=\prod_{v\in S}F_v,  \qquad  \A_F^{S}=\A_F\cap \prod_{v\not\in S}F_v.$$ 
Given $M$ a $G(F)$-module over a ring $R$ and an $R$-module $N$ we define
\begin{align*}
 \cA^S(M,N) = \{ & f:G(\A_F^{S})\longrightarrow \Hom_R(M,N) \colon \text{ there exists} \\
& \text{ an open compact subgroup } U\subseteq G(\A_F^{S}) \text{ with } f(\cdot\; U)=f(\cdot)\}.
\end{align*}
For $g\in G(F)$ we will denote by  $g^S$ the image of $g$ under the natural projection   $G(F)\rightarrow G(\A_F^{S})$. Note that $\cA^S(M,N)$ is equipped with commuting $G(F)$ and $G(\A_F^{S})$-actions: if $g\in G(\A_F^{S})$ and $f\in \cA^S(M,N)$ then
\[
\begin{array}{cc}
(h\cdot f)(g)=h(f((h^{-1})^S g)),&h\in G(F), \\
(h\cdot f)(g)=f(g h)),&h\in G(\A_F^{S}).\\
\end{array}
\]
If $\varphi\colon N_1\ra N_2$ is a morphism then by abuse of notation we denote also by $\varphi$ the map induced in cohomology
\begin{align*}
\varphi\colon  H^n(G(F), \cA^S(M,N_1)) \lra   H^n(G(F), \cA^S(M,N_2)).
\end{align*}

Write $\cA^S(N):=\cA^S(R,N)$.
If $N$ is a $L$-vector space for some field $L$, then $H^n(H,\cA^S(N))$ is a smooth $G(\A_F^{S})$-representation over $L$, for any subgroup $H\subseteq G(F)$.
\begin{remark}\label{AvsHom}
Let $S'\supseteq S$ be a finite set of places and write $T=S'\setminus S$.
For any $G(F)$-module $M$ and any $G(F_T)$-representation $V$ over $R$ we have an isomorphism of $(G(F),G(\A_F^{S'}))$-representations:
\begin{align*}
\phi:\Hom_{G(F_T)}(V,\cA^S(M,N)) \longrightarrow \cA^{S'}( V\otimes_R M,N).
\end{align*}
\end{remark}

We define $\cA(\C)$ to be the $\C$-vector space of functions $f:G(\A_F)\longrightarrow\C$ which satisfy:
\begin{itemize}
\item There is an open compact subgroup $U_f\subseteq G(\A_F^\infty)$ such that $f(\cdot\; U_f)=f(\cdot)$.

\item $f\mid_{G(F_\infty)}$ is $\cC^\infty$, where $F_\infty$ is the product of the completions of $F$ at all the archimedean places.

\item For all $\sigma\in\Sigma_B$, $f\in\cA(\C)$ is $K_\sigma$-finite, namely, its right translates by elements of $K_\sigma$ span a finite-dimensional vector space.

\item For all $\sigma\in\Sigma_B$, $f$ is ${\mathcal{Z}_\sigma}$-finite, where ${\mathcal{Z}_\sigma}$ is the centre of the universal enveloping algebra of $G(F_\sigma)$.
\end{itemize}
The action of $G(\A_F)$ on $\cA(\C)$  by right translation defines a smooth $G(\A_F^\infty)$-representation. For an archimedean place $\sigma\in\Sigma_B$ it is a $(\cG_\sigma,K_\sigma)$-module, where $\cG_\sigma$ is the Lie algebra of $G(F_\sigma)$. Moreover, $\cA(\C)$ is also equipped with a $G(F)$-action:
\[
(h\cdot f)(g)=f(h^{-1} g),\qquad h\in G(F),
\]
where $ g\in G(\A_F)$ and $f\in \cA(\C)$.
\begin{remark}
  Note that automorphic forms can be seen in a natural way as elements of $H^0(G(F),\cA(\C))$. However, in this note we will also be interested in the realization of automorphic representations in certain higher cohomology groups $H^i(G(F),\cA(\C))$.
\end{remark}

Let $\cV$ be a product of $(\cG_\sigma,K_\sigma)$-modules for all archimedean $\sigma\in\SigmaB$ and $G(F_v)$-representations at all $v\in S\setminus\SigmaB$. 
In analogy with Remark \ref{AvsHom}, we define
\[
\cA^S( \cV,\C):=\Hom(\cV,\cA(\C)),
\]
where the homomorphisms are to be understood in the cathegory of $\prod_{\sigma\in\SigmaB}(\cG_\sigma,K_\sigma)\times G(F_{S\setminus\SigmaB})$-modules. Note that $\cA^S( V,\C)$ in naturally endowed with actions of $G(F)$ and $G(\A_F^S)$. We say that a $G(F_S)$-representation $V$ is finite at $\Sigma_B$ if, for any $\sigma\in\Sigma_B$, the $G(F_\sigma)$-representation $V$ is finite dimensional. Note that in this situation $V$ is simultaneously a $G(F_S)$-representation and a $\prod_{\sigma\in\SigmaB}(\cG_\sigma,K_\sigma)\times G(F_{S\setminus\SigmaB})$-module (that will be denoted by $\cV$). The following lemma ensures that this notation is consistent with the previous one.
\begin{lemma}
  Let $V$ be a $G(F_S)$-representation finite at $\Sigma_B$ and let $\cV$ denote the corresponding $\prod_{\sigma\in\SigmaB}(\cG_\sigma,K_\sigma)\times G(F_{S\setminus\SigmaB})$-module. Then
  \[
  \cA^S(\cV,\C)=\cA^S(V,\C).
  \]
\end{lemma}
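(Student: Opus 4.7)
The plan is to identify the two sides by the obvious restriction/extension maps, exploiting the decomposition $G(\A_F) = G(F_S)\times G(\A_F^{S})$. Given $\Phi\in \Hom(\cV,\cA(\C))$, I would define $\Theta(\Phi):G(\A_F^{S}) \to \Hom_\C(V,\C)$ by $\Theta(\Phi)(g^S)(v) := \Phi(v)((1,g^S))$, where $(1,g^S)$ is viewed as an element of $G(F_S)\times G(\A_F^{S}) = G(\A_F)$. Conversely, given $f\in\cA^S(V,\C)$, I would define $\Theta'(f)(v):G(\A_F)\to\C$ by $\Theta'(f)(v)((g_S,g^S)) := f(g^S)(g_S\cdot v)$, using the $G(F_S)$-action on $V$. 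The goal is then to verify that $\Theta$ and $\Theta'$ are mutually inverse and preserve the residual $(G(F), G(\A_F^{S}))$-biactions.

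The substantive checks are the following. First, $\Theta'(f)(v)$ must lie in $\cA(\C)$: local constancy on $G(\A_F^{S})$ is immediate from that of $f$, while the archimedean conditions ($\cC^\infty$-smoothness, $K_\sigma$-finiteness, and $\mathcal{Z}_\sigma$-finiteness at each $\sigma\in\SigmaB$) follow from finiteness at $\Sigma_B$: the orbit map $g_\sigma\mapsto g_\sigma v\in V$ lands in a finite-dimensional $G(F_\sigma)$-subrepresentation, and composing with a linear functional inherits all these regularity properties automatically. Second, $\Theta(\Phi)$ must be locally constant on $G(\A_F^{S})$: picking a basis $v_1,\ldots,v_n$ of the finite-dimensional $V$ and intersecting the open compact stabilizers of the functions $\Phi(v_i)\in\cA(\C)$ yields a single open compact $U\subset G(\A_F^{S})$ under which $\Theta(\Phi)$ is right-invariant. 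Third, the identity $\Theta\circ\Theta'=\mathrm{id}$ is immediate from the definitions, and $\Theta'\circ\Theta=\mathrm{id}$ follows from the $G(F_S)$-equivariance of $\Phi$ via
$$\Phi(v)(g_S,g^S) \;=\; (g_S\cdot\Phi(v))(1,g^S) \;=\; \Phi(g_S v)(1,g^S) \;=\; \Theta(\Phi)(g^S)(g_S v) \;=\; \Theta'(\Theta(\Phi))(v)(g_S,g^S).$$

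Finally, one checks that $\Theta$ intertwines the residual $G(F)$- and $G(\A_F^{S})$-actions on both sides. Since these actions only modify the $G(\A_F^{S})$-variable in a compatible way on both sides, this reduces to a direct computation. The main technical obstacle is the first verification above: ensuring the archimedean regularity conditions of $\cA(\C)$ for $\Theta'(f)(v)$, and this is exactly where the hypothesis of finiteness at $\Sigma_B$ is essential. Without it, the orbit $G(F_\sigma)\cdot v$ might fail to span a finite-dimensional subspace, in which case its composition with a linear functional need not be $K_\sigma$-finite or $\mathcal{Z}_\sigma$-finite, and $\Theta'$ would not land in $\cA(\C)$.
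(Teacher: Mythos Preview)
There is a genuine gap in your argument, and it lies precisely where you treat the verification as routine. In the computation establishing $\Theta'\circ\Theta=\mathrm{id}$, the step
\[
(g_S\cdot\Phi(v))(1,g^S) \;=\; \Phi(g_S v)(1,g^S)
\]
invokes full $G(F_S)$-equivariance of $\Phi$. But by definition $\Phi\in\cA^S(\cV,\C)=\Hom(\cV,\cA(\C))$ is a morphism only in the category of $\prod_{\sigma\in\SigmaB}(\cG_\sigma,K_\sigma)\times G(F_{S\setminus\SigmaB})$-modules: at each archimedean $\sigma\in\SigmaB$ you are given $(\cG_\sigma,K_\sigma)$-equivariance, not $G(F_\sigma)$-equivariance. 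Upgrading the former to the latter is exactly the content of the lemma, so your argument is circular at this point.

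The paper's proof addresses precisely this upgrade. Given $\phi\in\Hom(\cV,\cA(\C))$, one forms the function $h_\phi^{v,h}(g)=\phi(g^{-1}v)(g,h)$ on $G(F_{\SigmaB})$; the desired $G(F_{\SigmaB})$-equivariance of $\phi$ is equivalent to this function being constant in $g$. The $K_\sigma$-equivariance of $\phi$ makes $h_\phi^{v,h}$ right $K_\sigma$-invariant, so it descends to a $\cC^\infty$ function on the connected symmetric space $\cH^{r_1}\times\aH^{r_2}$; the $\cG_\sigma$-equivariance of $\phi$ then forces the Lie algebra to annihilate $h_\phi^{v,h}$, so it is constant. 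This integration step is where the actual work happens. Your proposal misidentifies the ``main technical obstacle'' as checking that $\Theta'(f)(v)\in\cA(\C)$; that is the easy inclusion $\cA^S(V,\C)\hookrightarrow\cA^S(\cV,\C)$, which the paper dispatches in one line. The substantive direction is the reverse one, and your map $\Theta$ does not by itself witness it without the equivariance argument above.
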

\begin{proof}
Similarly as in Remark \ref{AvsHom}, it is easy to check that
\[
\cA^S(V,\C)=\Hom_{G(F^S)}(V,\cA(\C))\hookrightarrow \Hom(\cV,\cA(\C))= \cA^S(\cV,\C).
\]
On the other side, for any $\phi\in \Hom(\cV,\cA(\C))$, $h\in G(\A^{\Sigma_B})$ and $v\in V$, we define
\[
h^{v,h}_{\phi}:G(F_{\Sigma_B})\rightarrow\C,\qquad h^{v,h}_\phi(g)=\phi(g^{-1}v)(g,h).
\]
Since $\phi$ is a $(\cG_\sigma,K_\sigma)$-module morphism, for $\sigma\in\Sigma_B$, the function $h_{\phi}^{v,h}$ is $K_\sigma$-invariant, hence it defines a $\cC^\infty$ function on $\cH^{r_1}\times\aH^{r_2}$, where $\cH=\{z\in\C,\;{\rm Im}(z)>0\}$ is the Poincar\'e upper half-plane and $\aH=\{(x,y)\in\C\times\R,\;y>0\}$ is the upper half space. Moreover, the Lie algebra $\cG_\sigma$ acts trivially on $h_{\phi}^{v,h}$, hence $h_{\phi}^{v,h}$ is constant and $\phi\in \Hom_{G(F^S)}(V,\cA(\C))$.
\end{proof}


Given an automorphic representation $\pi$ of $G$ of weight $2$, we denote by $L_\pi$ its \emph{field of definition}, namely, the minimal field with the property that there exists an irreducible representation $\rho$ of $G(\A_F^\infty)$ over $L_\pi$ such that $\rho\otimes_{L_\pi}\C\simeq\pi\mid_{G(\A_F^\infty)}$. Observe that, since for any archimedean place not in $\SigmaB$ the representation $\pi_\sigma$ is trivial, the representation $\rho$ can be seen as a representation of $G(\A_F^{\SigmaB})$ over $L_\pi$.
For any $S$ as above, and any $G(\A_F^{S})$-representation $W$ over an extension $k$ of $L_\pi$, we write
\[
W_\pi:=\Hom_{G(\A_F^{S})}(\rho\mid_{G(\A_F^{S})}\otimes_{L_\pi}k,W).
\]
We denote by $W(\pi)\subseteq W$ the isotypical component of $\pi$; that is, the image of the elements of $W_\pi$.

\subsection{Archimedean connection morphisms}

Consider $\sigma\in \SigmaB$, a real archimedean place at which $B$ splits. Let $D_\sigma$ be the $(\cG_\sigma,K_\sigma)$-module of discrete series of weight $2$. 
Let $I_\sigma^+$ and $I_\sigma^-$ be the $(\cG_\sigma,K_\sigma)$-modules defined in \cite[Appendix 2]{Santi}, which sit in exact sequences:
\begin{equation}\label{eq: successio I+}
0\longrightarrow D_\sigma\longrightarrow  I_\sigma^{\pm}\longrightarrow\C(\pm 1)\longrightarrow 0;
\end{equation}
where $\C(-1)$ is $\C$ on which a matrix of $G(F_\sigma)$ acts by the sign of its determinant, and $\C(+1)=\C$ with trivial action.

Choose a sign at each real place in $\SigmaB$; equivalently choose a character
\begin{equation}\label{eq:lambda}
\lambda\colon G(F_{\Sigma_B})/G(F_{\SigmaB})^+\rightarrow \{\pm 1\}.
\end{equation}
As above, we denote by $S$ a finite set of places containing $\Sigma_B$. Let $V$ be a finite dimensional $G(F_{S\setminus\{\sigma\}})$-representation with associated  $\prod_{\tau\in\SigmaB\setminus\{\sigma\}}(\cG_\tau,K_\tau)\times G(F_{S\setminus\SigmaB})$-module $\cV$.
Then \eqref{eq: successio I+} gives rise to an exact sequence
\begin{eqnarray*}
0\longrightarrow \cA^S(V\otimes \C(\lambda(\sigma)),\C)\stackrel{\iota_\lambda}{\longrightarrow} \cA^S(\cV\otimes I_\sigma^{\lambda(\sigma)},\C)\stackrel{{\rm pr}_\lambda}{\longrightarrow}\cA^S(\cV\otimes D_\sigma,\C)\longrightarrow 0.\label{exseqfund1}
\end{eqnarray*}
which induces, for each $q\geq 0$, connection homomorphisms
\begin{eqnarray*}
\delta_\sigma^{\lambda(\sigma)}\colon H^q(G(F), \cA^S(\cV\otimes D_\sigma,\C)) \longrightarrow  H^{q+1}(G(F),\cA^S(V\otimes\C(\lambda(\sigma)),\C)).
\end{eqnarray*}

\subsection{Archimedean evaluation morphisms} 

Fix $f_\sigma^\pm$ to be the weight $0$ generator of $I_\sigma^\pm$ (see \cite[\S 1]{molina} for more details) mapping to $1$ under \eqref{eq: successio I+},  
and let $V$ and $\cV$ be as above. Any element of weight $0$ in $I_\sigma^\pm$ can be evaluated at elements in $\Delta_\sigma =\Div( \cH)$; this induces an evaluation morphism of $G(F)^+$-modules
\begin{align*}
  \ev_\sigma^\pm \colon \cA^S(\cV\otimes I_\sigma^\pm,\C)\lra \cA^{S}(V\otimes \Delta_\sigma,\C)
\end{align*}
by means of the formula
\begin{align*}
  \ev_\sigma^\pm (\varphi)(g)(v\otimes z) = \varphi(v\otimes f_\sigma^\pm)( z, g), \ \ \ \text{where }  z\in\cH,\;  g\in G(\A^S_F),\; v\in V.
\end{align*}
Write $\Delta_\sigma^0=\Div^0(\cH)$, and note that the above evaluation map induces a morphism of $G(F)^+$-modules 
\[
ev_\sigma^\pm\colon\cA^{S}(\cV\otimes D_\sigma,\C)\ra \cA^{S}(V\otimes \Delta_\sigma^0,\C)
\]
which makes the following diagram of $G(F)^+$-modules commutative:
\begin{align}
  \label{eq:evaluation-archimedean}
\xymatrix{
0\ar[r]&\cA^S(V\otimes \C,\C) \ar[r]\ar[d]^{\mathrm{id}}&\cA^{S}(\cV\otimes I_\sigma^\pm,\C)\ar[r]\ar[d]^{\ev_\sigma^\pm}& \cA^{S}(\cV\otimes D_\sigma,\C) \ar[r]\ar@{-->}[d]^{\ev_\sigma^\pm}&0\\
0\ar[r]& \cA^{S}(V,\C) \ar[r]&\cA^{S}(V\otimes \Delta_\sigma,\C)\ar[r]&\cA^{S}(V\otimes \Delta_\sigma^0,\C)\ar[r]&0.
}
\end{align}
 Here, the bottom exact sequence is induced by the exact sequence defining the degree $0$ divisors $0\ra \Delta^0_\sigma \ra \Delta_\sigma\ra \Z\ra 0.$
\subsection{Non-archimedean connection morphisms}
From now on, we assume that $B$ is split at all places in $S$.
Fix a non-archimedean place $v\in S$, and let $\ell\colon F_v^\times\ra M$ be a continuous homomorphism of topological abelian groups, which we write additively. We denote by $\St_v^M$ be the Steinberg representation of $G(F_v)$ over $M$; that is to say, $\St_v^M = C(\PP^1(F_v),M)/M$ (the $M$-valued continuous functions of $\PP^1(F_v)$ modulo constants). Following the ideas in~\cite{lennart-felix} we define
\begin{align*}
  \cE(\ell) = \{ (\phi,y)\in C(G(F_v),M)\times \Z \colon \phi\left( \smtx s x o t g \right)= \phi(g) + y \ell(t)  \}/(M,0).
\end{align*}
Then the projection onto the second component induces an exact sequence
\begin{eqnarray}\label{eq: successio st}
0\longrightarrow \St_v^M\longrightarrow  \cE(\ell)\longrightarrow \Z\longrightarrow 0.
\end{eqnarray}
The connection homomorphism in degree $0$ is
\begin{align*}
  \delta^0\colon H^0(G(F_v),\Z)\lra H^1(G(F_v),\St_v^M).
\end{align*}
Since $G(F_v)$ acts trivially on $\Z$ we have that $H^0(G(F_v),\Z)\simeq \Z$. We can thus define $c_{\ell,v}\in H^1(G(F)^+,\St^M_v)$ to be the image of $\delta^0(1)$ under the restriction map $H^1(G(F_v),\St^M_v)\lra H^1(G(F)^+,\St^M_v)$.


Now we consider the case where $\ell\colon F_v^\times \ra \Z$ is $\ell(x)=\ord_v(x)$, and denote by $c_{\text{ord},v}$ the cohomology class constructed above. 
Let $V$ be a $G(F_{S\setminus\{v\}})$-module over $\Z$. Then the cup product  $\cup c_{\ord,v}$ provides a connection homomorphism
\begin{align*}
\delta_v^\ord\colon  H^q(G(F)^+,\cA^{S}(V\otimes\St_v^\Z,\Z))\lra H^{q+1} (G(F)^+,\cA^{S}(V,\Z)).
\end{align*}

\subsection{Non-archimedean evaluation morphisms}

Let $L$ be any non-trivial extension of $F_v$, and $\ell:L^\times\ra M$ a continuous homomorphism of topological abelian groups. Let us consider the $L$-rational points of the $v$-adic upper half plane $\cH_v=\cH_v(L)=\PP^1(L)\setminus\PP^1(F_v)$. Let $\Delta_v =\Div( \cH_v)$ and $\Delta_v^0 =\Div^0( \cH_v)$. We have a well defined $\GL_2(F_v)$-invariant morphism
\begin{align}
  \label{eq:pairing-coeffs}
 z\longmapsto (f_z,1)\colon\Delta_v\longrightarrow \cE(\ell);\qquad\mbox{where}\quad f_z\smtx a b c d=\ell(c z+d).
\end{align}
Such morphism induces a morphism of $G(F)$-modules
\begin{align*}
  \ev_v^\ell \colon \cA^S( X\otimes_{\Z}\cE(\ell), M)\lra \cA^{S}(X\otimes_{\Z} \Delta_v,M ),
\end{align*}
for any $G(F_{S\setminus\{v\}})$-module $X$ over $\Z$.
Similarly as before, the evaluation map induces a $G(F)$-module morphism 
\[
\ev_v^\ell \colon\cA^{S}(X\otimes_{\Z}\St_v^M,M)\ra \cA^{S}( X\otimes_{\Z}\Delta_v^0,M)
\]
making following diagram of $G(F)$-modules  commutative:
\begin{align}
  \label{eq:evaluation-nonarch}
\xymatrix{
0\ar[r]&\cA^S(X\otimes_{\Z}M) \ar[r]\ar[d]^{\mathrm{id}}&\cA^{S}(X\otimes_{\Z}\cE(\ell),M)\ar[r]\ar[d]^{\ev_v^\ell}& \cA^{S}(X\otimes_{\Z}\St_v^M,M) \ar[r]\ar@{-->}[d]^{\ev_v^\ell}&0\\
0\ar[r]& \cA^{S}(X\otimes_{\Z}M) \ar[r]&\cA^{S}( X\otimes_{\Z}\Delta_v,M)\ar[r]&\cA^{S}( X\otimes_{\Z}\Delta_v^0,M)\ar[r]&0.
}
\end{align}
Here, as before,  the bottom exact sequence is induced by the exact sequence defining the degree $0$ divisors $0\ra \Delta^0_v \ra \Delta_v\ra \Z\ra 0.$

\begin{remark}
  \label{rmk:adding-primes}
  Let $w\not\in S$ be a non-archimedean prime at which $B$ is split, and denote by $S'=S\cup\{w\}$. Let $U_w$ be the Hecke operator attached to the double coset $K_0(w)\smtx{\varpi}{0}{0}{1} K_0(w)$, where $\varpi$ is a local uniformizer for $w$ and $K_0(w)$ is the usual open compact subgroup of matrices which are upper-triangular modulo $\varpi$ at $w$. Let $M$ and $N$ be $\Z$-modules as above. We can consider the composition
  \begin{align*}
    H^r(G(F)^+,\cA^S(M,N))^{F_w^\times K_0(w),U_w}&\stackrel{\cong}{\longrightarrow} H^r(G(F)^+,\cA^{S'}(M\otimes \St^\Z,N))\\
    &\stackrel{c_{\text{ord}}}{\longrightarrow} H^{r+1}(G(F)^+,\cA^{S'}(M,N)),
  \end{align*}
  coming from the identification $\St^\Z=\Coind_{\overline{K_0(w)}}^{\PGL(F_w)} 1/(U_w-1)$ and Remark~\ref{AvsHom}. If $\Hom(M,N)$ has the additional structure of an $L_\pi$-module, the composition above induces an isomorphism
  \[
    H^r(G(F)^+,\cA^S(M,N))_\pi\cong H^{r+1}(G(F)^+,\cA^{S'}(M,N))_{\pi},
  \]
which is functorial in $M$ and $N$.
\end{remark}

\section{Automorphic periods and abelian varieties}\label{section: oda}

In this section we define lattices of periods attached to weight 2 automorphic representations. We conjecture that the tori (archimedean or non-archimedean) attached to these lattices are in fact algebraic, and that they give a concrete form of a conjecture formulated in~\cite{Taylor} by Taylor. At the end of the section we survey the known cases of the conjecture.

Let $\pi$ be an automorphic representation of $G(\A_F)$ of weight 2, and denote by $\Pi$ its Jacquet--Langlands lift to $\GL_2(\A_F)$. We continue to denote $S$ a finite set of split places containing $\SigmaB$, and assume furthermore that the local representation at every finite place in $S$ is Steinberg.

Let $r=\#S$. Fix a place $v\in S$ such that $\pi_v$ is either Steinberg (if $v$ is non-archimedean) or discrete series (and thus $v$ real archimedean). 
Fix a character  $\lambda:G(F_{\SigmaB})/G(F_{\SigmaB})^+\rightarrow\{\pm 1\}$ such that if $v\in\SigmaB$ then  $\lambda(G(F_v))=1$. For a $G(F)$-module $M$ there is a natural action of $G(F)/G(F)^+$ on the cohomology groups $H^q(G(F)^+,M)$, and we write $  H^q(G(F)^+,M)^\lambda$ for the subspace on which $G(F)/G(F)^+$ acts through $\lambda$. We also denote by $\lambda^-$ the character obtained from $\lambda$ by sending $G(F_v)\setminus G(F_v)^+$ to $-1$ instead.

\begin{proposition}\label{prop: iso of the isotypical part}
Let $V_v=\St_v^\C$ be the Steinberg representation if $v$ is non-archimedean, or $V_v=D_v$ the discrete series representation if $v$ is archimedean. Let $\delta_v$ be the connection morphism $\delta_v^\ord$ if $v$ is non-archimedean, or $\delta_v^+$ if $v$ is real archimedean. Then $\delta_v$ induces an isomorphism
\[
\delta_v\colon H^{r-1}(G(F)^+,\cA^{S}(V_v,\C))^{\lambda}_\pi\stackrel{\cong}{\longrightarrow} H^r(G(F)^+,\cA^{S}(\C))^\lambda_\pi
\]
of one-dimensional $\C$-vector spaces.

\end{proposition}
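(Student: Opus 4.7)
I would deduce the isomorphism from the long exact sequence in $G(F)^+$-cohomology obtained by twisting the short exact sequences \eqref{eq: successio I+} (archimedean case) and \eqref{eq: successio st} (non-archimedean case) by $\cV$ and applying the contravariant functor $\cA^S(-,\C)$. The resulting connecting homomorphism in degree $r-1$ is precisely $\delta_v$, so the proposition reduces to showing that the $\pi$-isotypical cohomology of the middle coefficient module, namely $\cA^S(\cV\otimes I_\sigma^{\lambda(\sigma)},\C)$ in the archimedean case and $\cA^S(\cV\otimes\cE(\ord_v),\C)$ in the non-archimedean case, vanishes in degrees $r-1$ and $r$, together with the statement that both source and target are one-dimensional.

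\textbf{Spectral identification.} By strong multiplicity one, the $\pi^S$-isotypical subspace of $\cA(\C)$ is a single copy of $\pi$, and the definition of $W_\pi$ then identifies $\cA^S(X,\C)_\pi$ with $\Hom(X,\pi_S)$ for any coefficient $X$ (where $\pi_S$ denotes the local factor of $\pi$ at $S$). An adelic Matsushima--Delorme-type decomposition then expresses $H^*(G(F)^+,\cA^S(X,\C))_\pi^\lambda$ as a Künneth-type product over $v\in S$ of local cohomology groups: $(\cG_\sigma,K_\sigma)$-cohomology against $\pi_\sigma^\vee$ at archimedean $\sigma\in\Sigma_B$, and the appropriate smooth cohomology of $G(F_v)$ against $\pi_v^\vee$ at non-archimedean $v\in S$. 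The degree shifts match at the level of local factors, so degree $r$ with trivial coefficient at $v$ corresponds to degree $r-1$ with $\St_v$ or $D_v$ coefficient there.

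\textbf{Local vanishing and conclusion.} The required vanishing and one-dimensionality reduce to standard local computations. At $\sigma\in\Sigma_B$, $H^*(\cG_\sigma,K_\sigma,I_\sigma^{\lambda(\sigma)}\otimes D_\sigma^\vee)$ vanishes in the relevant degree (after matching with $\lambda$), while $H^1(\cG_\sigma,K_\sigma,D_\sigma\otimes D_\sigma^\vee)$ is one-dimensional with the correct sign. At non-archimedean $v$, the analogous local cohomology of $\cE(\ord_v)$ against $\St_v^\vee$ vanishes in the critical degree, whereas that of $\St_v$ against itself contributes one dimension; this is essentially a reformulation of the fact that $c_{\ord,v}$ pairs non-trivially with the Steinberg component and trivially with the unramified one. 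Multiplying the local factors gives the desired one-dimensionality of source and target, and the vanishing of the middle term forces $\delta_v$ to be the required isomorphism.

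\textbf{Main obstacle.} The hardest step is setting up an adelic Matsushima-type decomposition that works in the generality of the paper (arbitrary signature, arbitrary narrow class number, non-trivial central character), and carefully tracking the $\lambda$-eigenspace through the connecting morphism. In particular, one has to verify that the $\C(\lambda(\sigma))$ quotient in the archimedean sequence contributes only to the opposite eigenspace under the $G(F)/G(F)^+$-action, and that the local Hecke structure on $\cE(\ord_v)$ really does kill the unwanted components on the $\pi$-isotypical side; these are the places where the specific choices of the character $\lambda$ and of the Steinberg hypothesis on $\pi$ at finite places of $S$ are essential.
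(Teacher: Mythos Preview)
Your outline is in the right direction, but note that the paper does not argue this way: its proof is a two-line citation. For non-archimedean $v$ it refers to \cite[Lemma~5.2(b)]{Spiess} (stated there for $\GL_2$ over totally real $F$, with the remark that the argument extends to general $F$ and $G$); for archimedean $v$ it invokes the Generalized Eichler--Shimura isomorphism \cite[Corollary~2.2]{Hida} together with the explicit description of $\delta_v^\pm$ in \cite[\S 2.1]{molina}.

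Your sketch is essentially an attempt to unpack those references, and the step you yourself flag as the ``main obstacle'' is exactly their content. The ``adelic Matsushima--Delorme decomposition'' you posit---identifying $H^*(G(F)^+,\cA^S(\C))_\pi^\lambda$ with a K\"unneth product of local $(\cG_\sigma,K_\sigma)$-cohomologies and smooth $G(F_v)$-cohomologies---\emph{is} the Eichler--Shimura isomorphism in the archimedean direction and is the substance of Spie\ss's argument in the non-archimedean one. So your proposal does not furnish an independent proof; it correctly names the long exact sequence and the local vanishing statements, but defers the actual work to the same body of results the paper cites. If you want this to stand alone, you would need to supply (i) the spectral decomposition in the stated generality (arbitrary signature, non-trivial $\omega_\Pi$), and (ii) the local computations $\Hom_{G(F_v)}(\cE(\ord_v),\St_v)=0$ and the analogous $(\cG_\sigma,K_\sigma)$-cohomology vanishing for $I_\sigma^{\lambda(\sigma)}$, neither of which is entirely trivial.

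Two small technical remarks. First, in the non-archimedean case $\delta_v^{\ord}$ is \emph{defined} in the paper as cup product with $c_{\ord,v}$, not as the connecting homomorphism of the sequence built from $\cE(\ord_v)$; the two agree because $c_{\ord,v}=\delta^0(1)$ and cup product is compatible with boundary maps, but you should say so. Second, in the statement of the proposition the coefficient is just $V_v$, so the auxiliary module you write as $\cV$ is the trivial representation; your formulas should be read with $\cV=\C$.
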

\begin{proof}
If $v$ is non-archimedean, this result is given in \cite[Lemma 5.2 $(b)$]{Spiess} for $G=\GL_2$ over totally real fields $F$, but the proof can be extended to general $F$ and $G$. If $v$ is archimedean, the result follows from the Generalized Eichler-Shimura isomorphism \cite[Corollary 2.2]{Hida}, and the description of the Eichler-Shimura isomorphism given in \cite[\S 2.1]{molina}. 
\end{proof}

\begin{definition}
  Let $L_\pi$ be the field of definition of $\pi$, and let $R_\pi$ be the  ring of integers of $L_\pi$. Write
  \[
  X:=\Hom(R_\pi,\Z),
  \]
which is a free $\Z$-module of rank $[R_\pi\colon\Z]$.
\end{definition}

There is an isomorphism (see for example~\cite[Proposition 4.6]{Spiess}))
\[
  H^r(G(F)^+,\cA^S(\C))^\lambda \cong H^r(G(F)^+,\cA^S(R_\pi))^\lambda \otimes_{R_\pi} \C,
\]
so by Hecke theory there exists an element of $H^{r}(G(F)^+,\cA^{S}(R_\pi))^\lambda$ generating the subspace $H^{r}(G(F)^+,\cA^{S}(L_\pi))^\lambda(\pi)$. Composition with the canonical morphism $R_\pi\ra \Hom(X,\Z)$ provides an element $\tilde\phi\in H^{r}(G(F)^+,\cA^{S}(X,\Z))^\lambda_\pi$.

Note that there is an isomorphism between
\[
H^{r}(G(F)^+,\cA^{S}(R_\pi))^\lambda\otimes\Q\cong H^{r}(G(F)^+,\cA^{S}(X,\Z))^\lambda\otimes\Q,
\]
given by the natural isomorphism $L_\pi=R_\pi\otimes\Q\ra \Hom(X,\Z)\otimes\Q$.
\begin{proposition}
  Define  $M_v$ and $\cO_v$ as:
\begin{itemize}
\item $(M_v,\cO_v)=(\St_v^\Z,\Z)$ if $v$ is non-archimedean, and
\item $(M_v,\cO_v)=(D_v,\C)$ if $v$ is archimedean.
\end{itemize}

Then there exists a unique class
\[
  \phi\in H^{r-1}(G(F)^+,\cA^S(X\otimes M_v, \cO_v))^\lambda
\]
such that $\delta_v(\phi)=\tilde\phi$ and such that $\phi$ generates
\[
\left(  H^{r-1}(G(F)^+,\cA^S(X\otimes M_v, \cO_v))^\lambda\otimes_{R_\pi}\C\right)(\pi).
\]

\end{proposition}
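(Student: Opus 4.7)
The plan is to deduce the proposition from Proposition \ref{prop: iso of the isotypical part} by taking $\phi$ to be the unique preimage of $\tilde\phi$ under $\delta_v$ within the $\pi$-isotypic component, and then, in the non-archimedean case, establishing integrality by a diagram chase. The uniqueness assertion is the straightforward part: if $\phi_1$ and $\phi_2$ both satisfy the two conditions, then $\delta_v(\phi_1-\phi_2)=0$ and both project to generators of the $\pi$-isotypic part, so their difference lies in the kernel of $\delta_v$ restricted to that part. Tensoring Proposition \ref{prop: iso of the isotypical part} with $X$ over $R_\pi$ (and using the natural identification $X\otimes_\Z\C\cong L_\pi\otimes_\Q\C$) shows that this restricted $\delta_v$ is an isomorphism of one-dimensional $\C$-vector spaces, hence injective, forcing $\phi_1=\phi_2$.

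For existence, I would treat the archimedean and non-archimedean cases separately. In the archimedean case, $\cO_v=\C$ and $M_v=D_v$, so the source of $\delta_v$ is already a complex vector space and the $X$-version of Proposition \ref{prop: iso of the isotypical part} directly produces the required preimage. In the non-archimedean case, $\cO_v=\Z$, so in addition to producing $\phi$ over $\C$ one must verify that it is represented by an integral class. The plan here is to consider the short exact sequence of $G(F)^+$-modules
$$0\longrightarrow X \longrightarrow X\otimes_\Z\cE(\ord)\longrightarrow X\otimes_\Z\St_v^\Z\longrightarrow 0,$$
apply $\cA^S(-,\Z)$, and take the resulting long exact sequence on $G(F)^+$-cohomology, whose connecting homomorphism is $\delta_v$ (up to sign). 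Projecting to $\pi$-isotypic parts and using Proposition \ref{prop: iso of the isotypical part} in degrees $r-1$ and $r$ makes the $\pi$-part of $H^r(G(F)^+,\cA^S(X\otimes_\Z\cE(\ord),\Z))$ vanish over $\C$, so the image of $\tilde\phi$ there is torsion; combined with Hecke multiplicity one at primes outside $S$ to rule out this torsion, one obtains the desired integral lift $\phi$.

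The main obstacle will be precisely this integrality step in the non-archimedean case. It is the direct analogue of \cite[Lemma 5.2 (b)]{Spiess}, which is proved there under the assumptions that $F$ is totally real and the central character is trivial, and the generalization will require adapting Spiess's arguments to the present setting of arbitrary base field $F$, possibly non-trivial central character $\omega_\pi$, and the possible presence of archimedean places of $\Sigma_B$ in $S\setminus\{v\}$.
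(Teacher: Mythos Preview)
Your overall architecture matches the paper's: deduce everything from Proposition~\ref{prop: iso of the isotypical part}, treat the archimedean case directly (since there $\cO_v=\C$ and nothing integral is at stake), and isolate the non-archimedean integrality as the only real issue. Uniqueness is handled the same way in both.

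Where you diverge is in the non-archimedean integrality step. The paper does not chase the long exact sequence attached to $\cE(\ord)$. Instead it argues via a change-of-coefficients isomorphism
\[
H^{r-1}\bigl(G(F)^+,\cA^S(X\otimes \St_v^{\Z},\Z)\bigr)\otimes_{R_\pi}\C\;\cong\;H^{r-1}\bigl(G(F)^+,\cA^S(\St_v^{\C},\C)\bigr),
\]
quoting \cite[Proposition 4.9]{thesis-lennart} for the passage from $\St_v^{\Z}$ to $\St_v^{\C}$ (this is not formal: $\St_v^{\Z}\otimes\C$ consists of locally constant functions, whereas $\St_v^{\C}$ allows all continuous ones). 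Once this is in place, Hecke theory produces the integral generator $\phi$ directly. Your route through the extension $\cE(\ord)$ is a legitimate alternative, and the identification of the connecting map with $\delta_v^{\ord}$ is standard; but note that the short exact sequence you wrote is reversed (it should read $0\to X\otimes\St_v^{\Z}\to X\otimes\cE(\ord)\to X\to 0$, after which applying the contravariant $\cA^S(-,\Z)$ gives the orientation you want). More substantively, your plan to kill the torsion obstruction in $H^r\bigl(G(F)^+,\cA^S(X\otimes\cE(\ord),\Z)\bigr)_\pi$ by ``Hecke multiplicity one'' is precisely the point where real work hides: you also need right-exactness of the $\cA^S(-,\Z)$ sequence, and the torsion argument requires control of the integral structure that is not immediate from Proposition~\ref{prop: iso of the isotypical part} alone. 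The paper sidesteps all of this by importing the comparison isomorphism as a black box; if you want to carry out your approach, you will essentially be reproving that comparison (or the relevant part of \cite[Lemma 5.2(b)]{Spiess}) in the present generality, which is exactly the obstacle you flagged.
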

\begin{proof}
  Proposition~\ref{prop: iso of the isotypical part} gives a class $\phi'\in H^{r-1}(G(F)^+,\cA^S(V_v,\C))^\lambda_\pi$ such that $\delta_v(\phi')=\tilde\phi$. The canonical isomorphism
  \[
    \Hom(X,\Z)\otimes_{R_\pi} \C\cong \C
  \]
  induces an isomorphism
  \[
H^{r-1}(G(F)^+,\cA^S(X\otimes M_v,\cO_v))\otimes_{R_\pi}\C\cong  H^{r-1}(G(F)^+,\cA^S(V_v,\C)).
\]
(In the non-archimedean case, we need to invoke~\cite[Proposition 4.9]{thesis-lennart}). The result follows from Hecke theory.
\end{proof}

\begin{remark}\label{rmk:Xint}
  The construction of the multiplicative integrals appearing in~\cite{Dar-int} allows for the definition of a morphism
  \begin{align*}
     \Hom(\St_v^\Z,\Z)\lra \Hom(\St_v^{L^\times},L^\times)
  \end{align*}
  that sends $\mu$ to the homomorphism $\displaystyle f\mapsto \lim_\mathcal{U} \prod_{U\in \cU}f(t_U)^{\mu(U)}$, where $\cU$ runs over coverings of $\mathbb{P}^1(F_v)$ by open compacts whose diameter tends to $0$ and $t_U$ is any sample point in $U$. It gives rise to a natural $G(F)$-invariant morphism, sometimes called multiplicative integral:
\begin{align*}
  \Xint\times \colon \cA^S(X\otimes_\Z\St_v^\Z,\Z)\lra \cA^S(X\otimes_\Z\St_v^{L^\times},L^\times),
\end{align*}
for any $G(F)$-module $X$ over $\Z$.
\end{remark}

Note that  there is a natural isomorphism $\cA^S(X\otimes_\Z\Delta_v^0,\C_v^\times)\cong \cA^S(\Delta_v^0,\Hom(X,\C_v^\times))$.
Therefore, we can consider the evaluation map
\begin{align*}
  \ev_v\colon H^{r-1}(G(F)^+,\cA^S(X\otimes M_v,\cO_v))^\lambda\lra H^{r-1}(G(F)^+,\cA^S(\Delta_v^0,\Hom(X,\C_v^\times))),
\end{align*}
where $\ev_v$ is $\exp(\ev_v^+ + \ev_v^-)$ if $v$ is archimedean (we write $\exp(z)=e^{2\pi i z}$), and otherwise $\ev_v$ is $\ev_v^{\ell}\circ\Xint\times$ as in Remark \ref{rmk:Xint} (with $L=\C_v$ and $\ell$ the identity on $L^\times$).


The image of $\phi$ under  $\ev_v$ is a class
\[
    \psi\in H^{r-1}(G(F)^+,\cA^S(X\otimes_{\Z} \Delta_v^0, \C_v^\times)).
  \]



Consider the connection morphism
\[
\partial:H^{r-1}(G(F)^+,\cA^S(\Delta_v^0,\Hom(X,\C_v^\times)))\longrightarrow H^{r}(G(F)^+,\cA^S(\Hom(X,\C_v^\times))),
\]
arising from the short exact sequence
\begin{align}
  \label{eq:ses-divisors}
  0\lra \Delta_v^0\lra\Delta_v\stackrel{\deg}{\lra} \Z\lra 0.
\end{align}
\begin{proposition}\label{prop:latticeTrivial}
  There exists a $R_\pi$-invariant lattice $Y\subset\Hom(X,\C_v^\times)$ of rank $[R_\pi:\Z]$ such that $\partial(\psi)$ belongs to the image of the map
  \[
    H^r(G(F)^+,\cA^S(Y))\otimes\Q \lra H^r(G(F)^+,\cA^S(\Hom(X,\C_v^\times)))\otimes\Q
  \]
  induced by the inclusion.

\end{proposition}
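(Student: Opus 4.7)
The approach I have in mind is a diagram chase. The goal is to identify $\partial(\psi)$ with a class whose coefficients take values in a specific discrete subgroup of $\Hom(X,\C_v^\times)$, thereby exhibiting the required lattice $Y$.

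The key input is the commutative diagram \eqref{eq:evaluation-nonarch} (non-archimedean case) or \eqref{eq:evaluation-archimedean} (archimedean case). Since the leftmost column of the relevant diagram is the identity, chasing the class $\phi$ through both rows shows that $\partial(\psi) = \partial(\ev_v(\phi))$ coincides with the image of the top boundary, namely $\delta_v(\phi) = \tilde\phi$, after transporting along the appropriate coefficient change. In the non-archimedean case, this coefficient change arises from the multiplicative integral $\Xint\times$ of Remark \ref{rmk:Xint}: it sends the $\Z$-valued class $\tilde\phi$ to a $\C_v^\times$-valued class via a map of the form $n\mapsto q^n$ for an appropriate $q\in \C_v^\times$ (a $v$-adic period analogous to Tate's $q$). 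In the archimedean case, the analogous role is played by the exponential $\exp(2\pi i\,\cdot)$ built into $\ev_v=\exp(\ev_v^+ + \ev_v^-)$, which sends a $\C$-valued period $\Omega$ to $q=\exp(2\pi i\Omega)\in\C^\times=\C_v^\times$.

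I then define $Y$ as the image of the natural embedding $\Hom(X,\Z)\hookrightarrow \Hom(X,\C_v^\times)$ induced by $n\mapsto q^n$. Since $\Hom(X,\Z)$ is a free $\Z$-module of rank $[R_\pi\colon\Z]$, so is $Y$. The $R_\pi$-action on $\Hom(X,\C_v^\times)$ is induced by the $R_\pi$-module structure on $X=\Hom(R_\pi,\Z)$, and since this action is $\Z$-linear, it preserves the discrete subgroup $q^\Z\subseteq \C_v^\times$; hence $Y$ is $R_\pi$-invariant. Tensoring the integral identity $\partial(\psi) = (\text{image of }\tilde\phi)$ with $\Q$, and noting that $\tilde\phi$ already lives in the integral lattice $\Hom(X,\Z)$ after the isomorphism $L_\pi\cong \Hom(X,\Z)\otimes\Q$ invoked above, the assertion follows.

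The main obstacle is the rigorous justification of the commutativity between the coefficient-change morphism (the multiplicative integral $\Xint\times$, respectively the exponential) and the boundary $\partial$ of the divisor sequence. For the non-archimedean side this essentially amounts to unraveling the definition of $\cE(\ell)$ and checking that \eqref{eq: successio st} is compatible with \eqref{eq:pairing-coeffs} at the level of cocycles; for the archimedean side one needs to keep track of the two pieces $\ev_v^\pm$ separately and use that $\lambda$ selects a consistent sign. A secondary issue is pinning down $q$ in each case with enough precision to ensure integrality up to the $\otimes\Q$ factor already allowed by the statement, but no further denominators need to be introduced because $\tilde\phi$ is genuinely $\Hom(X,\Z)$-valued.
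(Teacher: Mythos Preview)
Your diagram chase is the right starting point, but the identification of $\partial(\psi)$ is not what you claim, and as a result the lattice you propose is not the correct one.

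\textbf{Non-archimedean case.} The diagram \eqref{eq:evaluation-nonarch} you invoke is built from $\cE(\ell)$ with $\ell=\operatorname{id}\colon L^\times\to L^\times$, because $\ev_v=\ev_v^{\operatorname{id}}\circ\Xint\times$. Hence the connecting map on the top row is $(\,\cdot\,)\cup c_{\operatorname{id},v}$, \emph{not} $\delta_v^{\ord}=(\,\cdot\,)\cup c_{\ord,v}$. The diagram therefore yields
\[
\partial(\psi)=\left(\Xint\times\phi\right)\cup c_{\operatorname{id},v},
\]
which is not $\tilde\phi=\phi\cup c_{\ord,v}$ pushed along any elementary coefficient change. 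The whole point is that these two classes are related only through the automorphic period $q_{\pi,v}\in F_v^\times\otimes_\Z R_\pi$ of \cite[Equation (4.3)]{lennart-felix}: one has $n(\phi\cup c_{\ord,v})\otimes q=n\,\ord_v(q)\big(\Xint\times\phi\big)\cup c_{\operatorname{id},v}$ for $n\gg 0$. This is a genuine theorem, not a formal compatibility. The paper then sets $Y=R_\pi\cdot q_{\pi,v}\subset\Hom(X,F_v^\times)$, and the fact that $Y$ is a lattice amounts to $\ord_v(q_{\pi,v})\neq 0$, i.e.\ the nonvanishing of the $\cL$-invariant, which is again quoted from \cite{lennart-felix}. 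Your map ``$n\mapsto q^n$'' with an unspecified $q$ does not capture this, and you give no argument for discreteness.

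\textbf{Archimedean case.} Here there are two diagrams, for $\ev_v^+$ and $\ev_v^-$. The $+$ diagram indeed gives $\partial(\ev_v^+(\phi))=\delta_v^+(\phi)=\tilde\phi$, but $\tilde\phi$ is $\Z$-valued so $\exp(\tilde\phi)=1$: this contribution \emph{dies}. What survives is
\[
\partial(\psi)=\exp\big(\delta_v^-(\phi)\big),
\]
and $\delta_v^-(\phi)$ lies in the $\lambda^-$-eigenspace $H^r(G(F)^+,\cA^S(X,\C))^{\lambda^-}$, not in the integral structure coming from $\tilde\phi$. Writing $\delta_v^-(\phi)=\tau\cdot\varphi$ with $\varphi$ an integral generator of the $\lambda^-$-part, the paper sets $Y=\exp(R_\pi+\tau R_\pi)$. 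Proving that this is a lattice requires showing $\bar\tau=-\tau$, which uses the conjugation relation $\overline{\delta_v^\pm(\phi)}=\pm\delta_v^\pm(\bar\phi)$ and the integrality of $\tilde\phi$ to deduce $\bar\phi=\phi$. Your proposal overlooks the $\pm$ asymmetry entirely and never addresses why $Y$ has full rank.

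In short, the commutative diagrams give you $\partial(\psi)$, but not as a transport of $\tilde\phi$; in each case an additional nontrivial ingredient (the automorphic period identity, resp.\ the imaginary nature of $\tau$) is needed both to identify $Y$ and to verify it is a lattice.
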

\begin{proof}
  \textit{Non-archimedean case: }
  Suppose that $v$ is non-archimedean. As proven in~\cite[Equation (4.3)]{lennart-felix}, there exists a ``automorphic period'' $q=q_{\pi,v}\in F_v^\times\otimes_{\Z} R_\pi$, which is unique up to torsion, such that
  \[
    n (\phi \cup c_{\ord,v}) \otimes q = n \cdot \ord_v(q) \left(\Xint\times \phi\right)\cup c_{\operatorname{id},v}
  \]
  for sufficiently large $n\in\Z_{\geq 1}$. Here $\operatorname{id}\colon F_v^\times\to F_v^\times$ is the identity map. We view $q$ as an element of $\Hom(X,F_v^\times)$ using the canonical injection $F_v^\times\otimes_{\Z} R_\pi\hookrightarrow \Hom(X,F_v^\times)$. Finally, define $Y\subset \Hom(X,F_v^\times)$ as the $R_\pi$-module generated by $q_{\pi,v}$. In~\cite[Remark 4.7(iii)]{lennart-felix} the authors show that $\ord(q_{\pi,v})\neq 0$ (as an element of $R_\pi$), and thus that $Y$ is indeed a lattice.

  The long exact sequences in cohomology obtained from the diagram in~\eqref{eq:evaluation-nonarch} yield a commutative diagram
  \[
    \xymatrix{
      H^{r-1}(G(F)^+,\cA^S(X\otimes \St_v^\Z,\Z))\ar[r]^-{\Xint\times(\cdot)\cup c_{\operatorname{id},v}}\ar[d]^{\ev_v}&H^r(G(F)^+,\cA^S(X,F_v^\times))\ar[d]\\
      H^{r-1}(G(F)^+,\cA^S(\Delta^0_v,\Hom(X,\C_v^\times)))\ar[r]^{\partial}&H^r(G(F)^+,\cA^S(\Hom(X,\C_v^\times))).
      }
  \]
  This gives $\partial(\psi) = \Xint\times \phi \cup c_{\operatorname{id},v}$. Therefore $\partial(\psi)$ belongs to $H^r(G(F)^+,\cA^S(Y))\otimes\Q$, as sought.

  \textit{Archimedean case: }
  Suppose now that $v$ is real archimedean. By~\eqref{eq:evaluation-archimedean}, we have
  \[
    (\partial\circ\ev_v^{+})(\phi)=\tilde\phi\in H^r(G(F)^+, \cA^S(X,\Z)).
  \]
  Since $\exp(\Z)=1$, we get
  \[
    \partial(\psi)=(\partial\circ\ev_v)(\phi)=\exp\left(\partial(\ev_v^+(\phi))+\partial(\ev_v^-(\phi))\right)=\exp(\partial(\ev_v^-(\phi))=\exp(\delta_v^-(\phi)).
    \]
    Note that $\delta_v^-(\phi)$ belongs to $H^r(G(F)^+,\cA^S(X,\C))(\pi)^{\lambda^-}$. Since
    \[
      H^r(G(F)^+,\cA^S(X,\C))=H^r(G(F)^+,\cA^S(R_\pi))\otimes_{\Z}\C,
    \]
    we can view $\delta_v^-(\phi)$ as belonging to the latter cohomology group. By choosing a generator $\varphi$ of the $\pi$-isotypical component of  $H^r(G(F)^+,\cA^S(R_\pi))(\pi)^{\lambda^-}$, we may write
    \[
      \delta_v^-(\phi) = \tau\cdot\varphi,\quad\tau\in R_\pi\otimes_{\Z}\C\cong \C^{[L_\pi\colon\Z]}.
      \]
    By identifying $\exp(\tau)\in \C^\times\otimes_{\Z} R_\pi$ with an element of $\Hom(X,\C^\times)$ using the canonical injection $R_\pi\otimes_{\Z}\C\hookrightarrow\Hom(X,\C)$ followed by $\exp$, we define $Y$ as the $R_\pi$-module that it generates. It is then clear that $\partial(\psi)$ belongs to $H^r(G(F)^+,\cA^S(Y))$.

    It remains to show that $Y$ is indeed a lattice. Note that $Y$ is $\exp(R_\pi+ \tau R_\pi)$, so it is enough to show that $R_\pi + \tau R_\pi\subseteq \Hom(X,\C)$ is a lattice. We will prove a slightly stronger fact, namely that $\tau$ belongs to $R_\pi\otimes i\R$. By~\cite[Lemma 2.2]{molina}, we have
    \begin{equation}
      \label{eq:molina}
      \overline{\partial^\pm(\phi)} = \pm \partial^{\pm}(\bar\phi).
    \end{equation}

    Therefore we deduce
    \[
      \delta_v^+\phi=\tilde\phi = \overline{\tilde\phi}=\overline{\delta_v^+\phi}=\delta_v^+\overline{\phi},
    \]
    and thus $\phi=\overline{\phi}$. In the previous display, the second equality holds because conjugation in $H^r(G(F)^+,\cA^S(R_\pi))\hookrightarrow H^r(G(F)^+,\cA^S(R_\pi)\otimes_{R_\pi}\C)=H^r(G(F)^+,\cA^S(\C))$ happens only on the $\C$-term.

    Another application of Equation~\eqref{eq:molina} yields
    \[
      \bar\tau\varphi = \bar\tau\overline{\varphi}=\overline{\delta_v^-\phi}=-\delta_v^-\overline{\phi}=-\delta_v^-\phi=-\tau\varphi,
    \]
    and so $\bar \tau=-\tau$, as wanted.


\end{proof}

Write $T_Y=\Q\otimes_{\Z}(\Hom(X,\C_v^\times)/Y)$, and consider the exact sequence
\[
  H^{r-1}\left(G(F)^+,\cA^S(\Delta_v,T_Y)\right)\lra H^{r-1}\left(G(F)^+,\cA^S(\Delta_v^0,T_Y)\right)\lra H^{r}\left(G(F)^+,\cA^S(T_Y)\right)
\]
arising again from~\eqref{eq:ses-divisors}. The previous proposition shows that there exists an element
\[
  \Psi\in   H^{r-1}\left(G(F)^+,\cA^S(\Delta_v,T_Y)\right)^\lambda
  \]
whose image in $H^{r-1}\left(G(F)^+,\cA^S(\Delta_v^0,T_Y)\right)$ coincides with the image of $\psi$ under the map induced from the quotient $\Hom(X,\C_v^\times)\ra T_Y$, as illustrated in the following diagram:
\[
    \xymatrix{
  H^{r-1}\left(G(F)^+,\cA^S(\Delta_v,\Hom(X,\C_v^\times))\right)\ar[r]\ar[d] &      H^{r-1}\left(G(F)^+,\cA^S(\Delta_v,T_Y)\right)\ni\Psi\ar[d] \\
\psi\in H^{r-1}\left(G(F)^+,\cA^S(\Delta_v^0,\Hom(X,\C_v^\times))\right)\ar[r]\ar[d]^{\partial}&      H^{r-1}\left(G(F)^+,\cA^S(\Delta_v^0,T_Y)\right)\ar[d]^{\partial} \\
 H^{r}\left(G(F)^+,\cA^S(\Hom(X,\C_v^\times))\right)\ar[r] &H^{r}\left(G(F)^+,\cA^S(T_Y)\right).
      }
    \]

Note that the previous construction can be carried out on the $\pi$-components, since all the maps in sight respect the $G(\A_F^S)$-action. Therefore we may and do assume that $\Psi$ generates $H^{r-1}\left(G(F)^+,\cA^S(\Delta_v,T_Y)\right)(\pi)$. The following Lemma ensures unicity of such a $\Psi$.

\begin{lemma}
 $H^{r-1}(G(F)^+,\cA_f^S(T_Y))_\pi=0.$
\end{lemma}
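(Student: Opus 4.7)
The key point is that $T_Y = \Q\otimes_\Z(\Hom(X,\C_v^\times)/Y)$ is by construction a $\Q$-vector space on which $G(F)^+$ acts trivially, because $X$, $\C_v^\times$ and $Y$ all carry the trivial action. As a result the $G(F)^+$-action on $\cA_f^S(T_Y)$, as well as the $G(\A_F^S)$-action through which the $\pi$-isotypic component is defined, both act solely on the $\cA_f^S$-factor and leave $T_Y$ untouched.

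My first step would be to reduce the statement to the case of trivial scalar coefficients $L_\pi$. Since $T_Y$ is flat over $\Q$ and since the $\pi$-isotypic component at any fixed open compact level $U\subseteq G(\A_F^S)$ is finite-dimensional over $L_\pi$ (by classical multiplicity-one for automorphic representations), one obtains a canonical isomorphism
\[
H^{r-1}(G(F)^+,\cA_f^S(T_Y))_\pi \;\cong\; H^{r-1}(G(F)^+,\cA_f^S(L_\pi))_\pi \otimes_{L_\pi} (T_Y\otimes_\Q L_\pi),
\]
after which it is enough to show $H^{r-1}(G(F)^+,\cA_f^S(L_\pi))_\pi=0$.

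The second step would invoke Proposition~\ref{prop: iso of the isotypical part}. That proposition identifies, via the connecting homomorphism $\delta_v$, the (one-dimensional) $\pi$-isotypic component of $H^{r-1}(G(F)^+,\cA^S(V_v,\C))^\lambda$ with the $\pi$-isotypic component of $H^r(G(F)^+,\cA^S(\C))^\lambda$; in other words, with trivial coefficients the $\pi$-isotypic line lies entirely in degree $r$. Combined with the multiplicity-one / cohomology-concentration results underlying the proposition (Hida's generalized Eichler--Shimura theorem in the archimedean case, and the Spiess-type vanishing of~\cite{Spiess} in the non-archimedean case), this forces the $\pi$-isotypic part of $H^{r-1}(G(F)^+,\cA^S_f(L_\pi))$ to vanish, completing the proof.

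The main obstacle is rigorously justifying the reduction in the first step, since $T_Y$ is in general an infinite-dimensional $\Q$-vector space and the naive identity $\cA_f^S(T_Y)=\cA_f^S(\Q)\otimes_\Q T_Y$ fails at the level of full modules (functions with arbitrary image need not decompose as finite linear combinations). I would circumvent this by first passing to $U$-invariants and to the $\pi$-isotypic subspace—where the resulting $L_\pi$-vector space is finite-dimensional by multiplicity-one—checking the tensor-product identity there by hand using $\Q$-flatness of $T_Y$ together with the triviality of the $G(F)^+$-action, and finally taking the direct limit over $U$.
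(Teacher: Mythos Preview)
Your approach is essentially the same as the paper's. The paper's proof is two lines: it observes that $T_Y$ is an $L_\pi$-vector space (not just a $\Q$-vector space, since $Y$ was constructed to be $R_\pi$-stable), reduces to coefficients $L_\pi$, and then invokes Proposition~\ref{prop: iso of the isotypical part} to assert that $r$ is the lowest cohomological degree in which $\pi$ appears with trivial coefficients, whence $H^{r-1}(G(F)^+,\cA_f^S(L_\pi))_\pi=0$. Your extra care about the infinite-dimensionality of $T_Y$ and the passage to $U$-invariants is not misplaced---the paper leaves this implicit---but the underlying argument is identical.
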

\begin{proof}
Note that $T_Y$ is a $L_\pi$-vector space, thus the result follows from the fact that $H^{r-1}(G(F)^+,\cA_f^S(L_\pi))_\pi$ is zero since, by Proposition \ref{prop: iso of the isotypical part}, $r$ is the lowest cohomology group where $\pi$ appears.
\end{proof}

 Recall the variety $A_\Pi/F$ alluded to in the introduction, which is conjecturally attached to $\Pi$ (cf. \cite{Taylor}). In general $ A_\Pi$ is expected to have dimension $[L_\Pi\colon \Q]$ and to satisfy that $\mathrm{End}(A_\Pi\otimes \bar \Q)\otimes \Q\simeq L_\Pi$. However, when $F$ is totally imaginary then it can also be the case that  $\dim A_\Pi=2[L_\Pi\colon\Q]$ and $\mathrm{End}(A_\Pi\otimes \bar \Q)\otimes \Q \simeq  D$ is some quaternion division algebra with center $L_\Pi$. If $L_\Pi$ is totally real, the arguments of \cite[\S 2]{GM} rule out this case under our running assumption that there exists at least one place $v$ at which $\Pi$ is Steinberg or discrete series. 
 Since in the present article we are allowing $\omega_\Pi$ to be non-trivial, $L_\Pi$ can also be a CM field in our setting, but the arguments of \cite[\S 2]{GM} can be easily adapted to treat also this case.

To begin with, if $v$ is archimedean then by assumption $v$ is real and therefore $A_\Pi$ has dimension $[L_\Pi\colon \Q]$ in this case. Suppose now that $v$ is non-archimedean, and that $\dim A_\Pi=2[L_\Pi\colon \Q]$ and $\mathrm{End}(A_\Pi\otimes \bar \Q)\otimes \Q \simeq  D$ is a quaternion algebra as above. Then we can find a field $K$ of degree $\dim A_\Pi$ such that $K\hookrightarrow \mathrm{End}(A_\Pi\otimes \bar \Q)\otimes \Q $. We are assuming that $v$ divides exactly the level of the newform associated to $\Pi$, so that the conductor of $A_\Pi$ is divisible exactly by $v^{\dim A_\Pi}$. Now a similar argument to that in \cite[Proposition 2.4]{GM} shows that $A_\Pi$ has purely multiplicative reduction at $v$. Indeed, \cite[Proposition 2.4]{GM} proves this assertion if $K$ is totally real but this assumption is only used to deduce that the determinant of (one of the factors of) the $\ell$-adic representation of $V_\Pi$ is the cyclotomic character; in our situation, and according to the properties that one expects for the conjectural Galois representation attached to $\Pi$, the determinant equals the product of the cyclotomic character with the central character $\omega_\Pi$, which by assumption is also unramified at $v$ and therefore the rest of the argument applies without change. Finally, note that if $L_\Pi$ is a CM field then $D$ is totally indefinite, and one can see as in the proof of \cite[Proposition 2.2]{GM} that $A$ has potentially good reduction at $v$, which gives then a contradiction. 
\begin{remark}\label{remark:A_Pi}
To sum up the previous discussion, we record that under our running assumptions $A_\Pi$ is always of dimension $[L_\Pi\colon \Q]$; moreover, if $v$ is non-archimedean then $A_\Pi$ has purely multiplicative reduction at $v$ and therefore it admits a $v$-adic uniformization.
\end{remark}
\begin{conjecture}\label{conj: oda}
  There exists a $R_\pi$-equivariant isogeny between $\Hom(X,\C_v^\times)/Y$ and $A_\Pi/\C_v$.
\end{conjecture}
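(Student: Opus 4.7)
The plan is to establish the sought isogeny by identifying the lattice $Y$, which was built from purely automorphic data, with the $v$-adic period lattice coming from the uniformization of $A_\Pi$. Assuming the conjectural $A_\Pi$ exists with its expected properties, Remark~\ref{remark:A_Pi} guarantees that $A_\Pi$ admits a $v$-adic uniformization of the form $\Hom(X',\C_v^\times)/Y'$ for some $R_\pi$-stable lattice $Y'$ of rank $[R_\pi\colon\Z]$. The task then reduces to constructing an $R_\pi$-equivariant isomorphism $Y\otimes\Q\cong Y'\otimes\Q$ that is compatible with the ambient tori, so that the induced map on quotients extends to the desired isogeny. By the universal property of the Picard/Albanese construction, it suffices to match the two lattices of periods.

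In the archimedean case I would transfer the statement to $\GL_2(\A_F)$ via the Jacquet--Langlands correspondence, using the Steinberg/discrete-series hypotheses at places of $S$ to realize the relevant cohomology as the (quaternionic) Betti cohomology of an associated Shimura variety whose Albanese is isogenous to a power of $A_\Pi$. The periods $\tau\in R_\pi\otimes i\R$ extracted in the proof of Proposition~\ref{prop:latticeTrivial} should then be identified with the quasi-periods of $A_\Pi$ via a suitable generalization of the Eichler--Shimura isomorphism and Oda--Yoshida-type period relations. When $F$ is totally real and $\omega_\Pi$ is trivial this is classical; the extension to arbitrary $F$ and non-trivial central character requires a careful $(\cG_v,K_v)$-cohomological analysis matching the $D_v$-coefficient class $\phi$ with the holomorphic and anti-holomorphic differentials on $A_\Pi/\C_v$.

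In the non-archimedean case, the key input is already present in the proof of Proposition~\ref{prop:latticeTrivial}: the identity of~\cite{lennart-felix} shows that $Y$ is generated (up to torsion) by the \emph{automorphic} $L$-invariant $q_{\pi,v}\in F_v^\times\otimes R_\pi$. Since $A_\Pi$ has purely multiplicative reduction at $v$, it carries a \emph{motivic} $L$-invariant, namely its Tate period $q_{A_\Pi,v}$, which generates $Y'$. The conjecture thus reduces to proving $q_{\pi,v}=q_{A_\Pi,v}$ up to $R_\pi$-equivariant isogeny. This is precisely the automorphic--vs--arithmetic $L$-invariant comparison, established in the generality of~\cite{lennart-L-invariants} under suitable hypotheses; the step to take is therefore to verify that those hypotheses are met by the data $(F,\pi,v,\omega_\pi)$ considered here, and otherwise to adapt Gehrmann's patching and deformation arguments to the setting of non-trivial central character.

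The main obstacle — which is why the statement is phrased as a conjecture rather than a theorem — is twofold. First, the very existence of $A_\Pi$ with the dimension and endomorphism structure predicted in Remark~\ref{remark:A_Pi} is not known for arbitrary $F$ and non-trivial $\omega_\Pi$; without it one cannot even write down the right-hand side of the isogeny. Second, even granting $A_\Pi$, the equality of the automorphic period with the motivic one (the Tate parameter in the non-archimedean case, the transcendental periods in the archimedean case) is an instance of the general expected compatibility between automorphic forms and motives, whose proof in our adelic, arbitrary-signature, non-trivial-central-character setting lies beyond currently available techniques and would rely on substantial generalizations of the work of Gehrmann and of Eichler--Shimura--Oda--Yoshida.
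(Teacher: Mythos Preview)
The statement is a \emph{conjecture}, and the paper does not prove it. After stating it, the paper simply surveys the known cases: when $\Sigma_B=\{v\}$ it reduces via Remark~\ref{rmk:adding-primes} to the Eichler--Shimura isomorphism for Shimura curves; when $v$ is archimedean, $B=M_2(F)$ with $F$ real quadratic and $\pi$ a base-change, it is Oda's theorem; and in the non-archimedean case, under an ``SNV'' hypothesis plus ordinarity and existence of a totally-definite Jacquet--Langlands transfer, it follows from the $L$-invariant comparison of~\cite{lennart-L-invariants} combined with~\cite{lennart-felix}. For the rest of the paper the conjecture is simply \emph{assumed}.

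Your proposal is therefore not to be judged as a proof attempt but as a discussion, and as such it is accurate and aligns well with the paper's own stance. You correctly isolate the two ingredients that would be needed --- the existence of $A_\Pi$ with the predicted properties, and the identification of the automorphic period lattice $Y$ with the motivic one (Tate periods in the non-archimedean case, Eichler--Shimura/Oda-type periods in the archimedean case) --- and you correctly point to~\cite{lennart-L-invariants} as the relevant input in the non-archimedean direction. Your discussion is in fact somewhat more expansive than the paper's: the paper does not spell out a general strategy at all, only the three bullet points above. The one thing worth tightening is your archimedean paragraph: transferring to $\GL_2$ via Jacquet--Langlands and invoking an Albanese of a Shimura variety is not quite the mechanism in the cases the paper cites (there one stays on the quaternionic side and uses the Shimura curve directly when $\Sigma_B=\{v\}$, or Oda's period relations for Hilbert modular surfaces when $B=M_2(F)$), but this is a matter of emphasis rather than an error.
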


This conjecture is known  to hold for totally real base fields $F$, in certain cases:
\begin{itemize}
  \item When $\Sigma_B=\{v\}$ Conjecture~\ref{conj: oda} is known: thanks to Remark~\ref{rmk:adding-primes}, it is enough to consider in this case the situation $S=\Sigma_B$, and then this is the statement of the Eichler--Shimura isomorphism in the setting of Shimura curves. See for instance~\cite[\S 4.1]{Santi}.

  \item When $v$ is archimedean, $B=M_2(F)$ for a real quadratic $F$ and $\pi$ is the base-change of a modular form of weight $2$ with quadratic Hecke field, then Conjecture~\ref{conj: oda} is a result of Oda, see~\cite[Main Theorem B]{oda-book}.

  \item Under certain hypothesis (``SNV''), and when $v=\mathfrak{p}$ is a finite prime of prime norm $p$ satisfying that $\pi$ is ordinary at all primes dividing $p$, and that it admits a Jacquet--Langlands lift to a totally-definite quaternion algebra, then the conjecture follows from~\cite[Theorem B]{lennart-L-invariants} and~\cite[Theorem 4.9]{lennart-felix}
  , which strengthen the results of \cite{Dar-int} and \cite{Spiess} in this direction.
  \end{itemize}

For the rest of the paper, we assume that Conjecture \ref{conj: oda} holds. Therefore we will regard $\Psi$ as an element of
\begin{align}
  \label{eq:definition-of-Psi}
  \Psi=\Psi^\lambda&\in   H^{r-1}\left(G(F)^+,\cA^S(\Delta_v,A_\Pi(\C_v)\otimes\Q)\right)^\lambda_\pi.
\end{align}

\section{Darmon points}\label{D-P}

The goal of this section is to introduce a family of points attached to optimal embeddings of quadratic extensions $K$ of our base field $F$. These generalize all known constructions of the so-called ``Darmon points'', as treated in~\cite{GMS}. In addition to their algebricity and a Shimura reciprocity law, we have not been able to resist the temptation to conjecture a Gross--Zagier-type formula as well. We hope that further experiments will provide evidence for the validity of such formula in our setting.

\subsection{Automorphic cohomology classes}
Let $\Pi$ be a cuspidal automorphic representation of $\GL_2(F)$ of parallel weight $2$ and let $K$ be a quadratic extension of $F$. Define $\Sigma_{\ns}$ as
\[
\Sigma_{\ns}:=\{w\mid \infty;\;w\mbox{ does not split in }K\},
\]
and choose a set of finite places
\[
S_{\ns}\subseteq\{w\nmid \infty;\;w\mbox{ does not split in }K\;\mbox{and }\Pi_w\mbox{ is Steinberg}\}.
\]
Let us assume that the cardinality of $\Sigma_{\ns}\cup S_{\ns}$ is odd.
Fix $v\in \Sigma_{\ns}\cup S_{\ns}$ and let $G$ be the multiplicative group of a quaternion algebra $B$ over $F$ with ramification set $(\Sigma_{\ns}\cup S_{\ns})\setminus\{v\}$. Fix a splitting $\iota_v\colon B\hookrightarrow M_2(F_v)$. Let $\pi$ be the Jacquet-Langlands transfer of $\Pi$ to $G$, which exists by our choice of ramified places in $B$. Let
\[
\Sigma_{s}:=\{w\mid \infty;\;w\mbox{ splits in }K\},
\]
and choose a set of finite places
\[
S_{s}\subseteq\{w\nmid \infty;\;w\mbox{ splits in }K\;\mbox{and }\pi_w\mbox{ is Steinberg}\}.
\]

Fix also an embedding $\psi\colon K\hookrightarrow B$, which will allow us to  identify $K$ with its image by $\psi$ in $B$. We  will write $K^+\subset K^\times$ for the intersection $K^+=K^\times \cap G(F)^+$. Let $S=\Sigma_s\cup S_s\cup \{v\}$, and choose a character $\lambda\colon K^\times/K^+\ra\{\pm 1\}$ which is equivalent, via $\psi$, to a character of $G(F_{\Sigma_B})/G(F_{\Sigma_B})^+$. In~\eqref{eq:definition-of-Psi}  we have attached to such a character and to the representation $\pi$ an element $\Psi=\Psi^\lambda$ in
\[
  H^{r-1}\left(G(F)^+,\cA^S(\Delta_v,A_\Pi(\C_v)\otimes\Q)\right)^\lambda_\pi.
  \]




Let $ z\in \Delta_v$ be the unique element fixed by $K^\times$ such that $\iota_v(x)( z,1)^t=x( z,1)^t$ for all $x\in K$.
\newcommand{\CoInd}{\operatorname{CoInd}}
\begin{lemma}
For an abelian group $M$ we have a natural $G(F)^+$-equivariant isomorphism
\[
\rho\colon\cA^S(\Z[G(F)^+/K^+],M)\longrightarrow \CoInd_{K^+}^{G(F)^+}\cA^{S}(M).
\]
\end{lemma}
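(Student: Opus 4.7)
My plan is to exhibit $\rho$ via an explicit formula that implements Frobenius/Shapiro-style reciprocity, and then verify equivariance by chasing the action formulas that were given at the start of the paper.

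First I would unwind both sides in parallel. On the left, using $\Hom(\Z[G(F)^+/K^+],M) = \{\phi\colon G(F)^+\to M : \phi(xk)=\phi(x)\text{ for all }k\in K^+\}$, an element $f\in \cA^S(\Z[G(F)^+/K^+],M)$ becomes a function $f\colon G(\A_F^S)\times G(F)^+\to M$ that is smooth in the first variable (a single open compact $U$ works uniformly) and right-$K^+$-invariant in the second. On the right, using the coinduction convention $F(xk)=k^{-1}\cdot F(x)$ and recalling that the $G(F)$-action on $\cA^S(M)$ is $(k^{-1}\cdot \tilde f)(g)=\tilde f(k^S g)$, an element $F\in\CoInd_{K^+}^{G(F)^+}\cA^S(M)$ is a function $F\colon G(F)^+\times G(\A_F^S)\to M$ (smooth in the second entry) satisfying $F(xk,g)=F(x,k^S g)$.

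Next I would define $\rho$ and its inverse. Set
\[
\rho(f)(x)(g) \;:=\; f(x^S g,\, x), \qquad \rho^{-1}(F)(g)(xK^+)\;:=\;F(x)\!\left((x^{-1})^S g\right),
\]
where $x^S$ denotes the image of $x\in G(F)^+$ in $G(\A_F^S)$. The shift by $x^S$ is the key detail: it is exactly what is needed so that the $G(\A_F^S)$-variable interacts correctly with the $K^+$-translation on $G(F)^+$. Smoothness is preserved because the same open compact $U\subseteq G(\A_F^S)$ stabilizing $f$ in the first variable stabilizes every $\rho(f)(x)$; well-definedness of $\rho^{-1}$ (independence of coset representative) follows from the coinduction identity $F(xk,g)=F(x,k^S g)$.

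Then I would verify the three compatibilities. (i) $\rho(f)\in\CoInd$: a direct computation gives $\rho(f)(xk)(g)=f(x^Sk^Sg,xk)=f(x^Sk^Sg,x)=\rho(f)(x)(k^Sg)$, using right-$K^+$-invariance of $f$. (ii) $G(F)^+$-equivariance of $\rho$: using $(h\cdot f)(g,x)=f((h^{-1})^Sg, h^{-1}x)$ (which is the action formula from Section~2 applied to $\Hom(\Z[G(F)^+/K^+],M)$), one computes $\rho(h\cdot f)(x)(g)=f((h^{-1}x)^Sg,h^{-1}x)=\rho(f)(h^{-1}x)(g)=(h\cdot\rho(f))(x)(g)$, matching the natural $G(F)^+$-action on the coinduced module. (iii) The identities $\rho^{-1}\circ\rho=\mathrm{id}$ and $\rho\circ\rho^{-1}=\mathrm{id}$ are straightforward substitutions.

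The proof is essentially formal once the conventions are aligned, and there is no serious obstacle — the only mildly delicate point is bookkeeping the shift by $x^S$ so that the $G(F)^+$-action (which simultaneously shifts the adelic variable by $(h^{-1})^S$ and the coefficient variable by $h^{-1}$) matches the standard right-translation action on the coinduced module. Every other check is an immediate unwinding.
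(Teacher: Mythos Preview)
Your proof is correct and follows essentially the same direct approach as the paper: both unwind the two sides as spaces of functions on $G(F)^+\times G(\A_F^S)$ and exhibit the isomorphism by an explicit formula, the only difference being a conventional one (the paper uses the coinduction condition $F(k\gamma,g)=F(\gamma,k^{-1}g)$ and the formula $\rho(f)(\gamma,g)=f(\gamma^{-1},\gamma^{-1}g)$, which is your formula after the substitution $x\leftrightarrow\gamma^{-1}$). Your verification of the coinduction identity and $G(F)^+$-equivariance is complete and matches the paper's.
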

\begin{proof}
  By definition, the left-hand side is the module whose elements are
  \[
\cA^S(\Z[G(F)^+/K^+],M) = \{f\colon \Z[G(F)^+/K^+]\times G(\A_F^S) \ra M\colon f\text{ is continuous}\}
\]
and the right-hand side consists of elements
  \[
 \CoInd_{K^+}^{G(F)^+}\cA^S(M) = \{F\colon G(F)^+\times G(\A_F^S) \ra M\colon F(k\gamma,g)=F(\gamma,k^{-1}g)\colon F\text{ is continuous}\}.
\]
The isomorphism $\rho$ is then given by
\[
  \rho(f)(\gamma,g)=f(\gamma^{-1},\gamma^{-1}g),\quad \rho^{-1}(F)(\gamma,g)=F(\gamma^{-1},\gamma^{-1}g).
  \]
One easily checks that this is indeed $G(F)^+$-equivariant.
\end{proof}
Notice that $\Z[G(F)^+/K^+]\simeq \Z[G(F)^+ z]\subset\Delta_v$.
Hence if we restrict $\Psi$ to $\Z[G(F)^+ z]$ we obtain
\begin{eqnarray*}
\Psi_K&\in& H^{r-1}(G(F)^+,\cA^S(\Z[G(F)^+/K^+],A_\Pi(\C_v)\otimes\Q))^\lambda_\pi\\
&\simeq& H^{r-1}(G(F)^+,\CoInd_{K^+}^{G(F)^+}\cA^S(A_\Pi(\C_v)\otimes\Q))^\lambda_\pi\\
&\simeq& H^{r-1}(K^+,\cA^S(A_\Pi(\C_v)\otimes\Q))^\lambda_\pi,
\end{eqnarray*}
where the last equality has been obtained by Shapiro's Lemma. 

\subsection{Homology classes arising from class field theory} 
Write $T$ the algebraic group associated to $K^\times/F^\times$, namely $T(R)=(K\otimes_F R)^\times/(R)^\times$, for any $F$-algebra $R$. Write also $T(F)^+=K^+/F^\times$.

Let $\cO=\cO_{S}$ be a maximal compact subgroup of $T(\A_F^{S})$, and let $\Gamma$ be the intersection $T(F)^+\cap \cO$. The class group $(T(F)^+\cO)\backslash T(\A_F^{S})$ is finite, and we let $\{g_i\}_{i=1,\ldots, h}$ be a system of coset representatives. This choice of representatives gives rise to a fundamental domain $\cF$ for $\cO\backslash T(\A_F^{S})$ under the action of $\Gamma\backslash T(F)^+$, satisfying
\[
  \cF = \coprod_{i=1}^h g_i \cO\subset T(\A_F^{S}).
  \]

The goal of this $\S$ is to define a natural homology class
\[
  \xi\in H_{r-1}(T(F)^+, C_c^0(T(\A_F^{S}),\Z)).
\]
By Shapiro, this group is isomorphic to
\[
  H_{r-1}(\Gamma, C(\cF,\Z)),
\]
so we will instead construct a class in this latter group.

Consider first the group $H_{r-1}(\Gamma,\Z)$. The group $\Gamma$ is the group of relative $S$-units of $K$ and, since all places of $S$ except $v$ are split in $K$, the group $\Gamma$ has rank $\#S -1 = r-1$. If $r=1$, we choose $\zeta\in H_{0}(\Gamma,\Z)=\Z$ to be a generator (there are only two of them). If $r>1$, we choose $\zeta\in H_{r-1}(\Gamma,\Z)$ to be a preimage of a generator of $H_{r-1}(\Gamma/\text{torsion},\Z)$.

On the other hand, the characteristic function $1_{\cF}$ on $\cF$ gives rise to a generator of
\[
  H^0(\Gamma,C(\cF,\Z)).
\]
The cap product $\zeta\cap 1_{\cF}$ is the sought class in $H_{r-1}(\Gamma, C(\cF,\Z))$. See \cite[Section 1.1]{lennart-felix} for a similar construction.

Let $K_{S}^{\text{ab}}$ be the maximal 
abelian extension of $K$ which is unramified and totally split at the primes of $S$, and let $\cG =\Gal(K_{S}^{ab}/K)$. By Class Field Theory, the Artin map factors through
\[
  \A_K^\times/
  K^\times\longrightarrow \left(K^\times / K^+\right) \times \left((\A_K^{S})^\times/ K^+\right)\stackrel{\rho_K}{\longrightarrow} \cG.
\]

 Hence by means of $\rho_K$, we can identify
\begin{align}\label{eq:functions with compact support}
C(\cG,\bar\Q)\simeq \bigoplus_{\beta\colon K^\times/K^+\to\{\pm 1\}} H^0(K^+, C((\A_K^S)^\times/U,\bar\Q)),
\end{align}
where $U\subset (\A_K^S)^\times$ is $(\A_K^S)^\times\cap K_\infty^\times$.
\subsection{Construction of points}
Note that the product provides a well defined $K^+$-equivariant pairing
\begin{equation}\label{cup-C-Cc}
  C_c(T(\A_F^S),\Z)\times C((\A_K^S)^\times,\bar\Q)\lra C_c^F((\A_K^S)^\times,\bar\Q),
\end{equation}
where $C_c^F((\A_K^S)^\times,\bar\Q)$ is the subset of functions $f\in C((\A_K^S)^\times,\bar\Q)$ with compact support modulo $(\A_F^S)^\times$, namely ${\rm Supp}(f)(\A_F^S)^\times\slash(\A_F^S)^\times$ is compact in $T(\A_F^S)$.

There is a $T(F)$-invariant pairing
\begin{align}\label{eq: pairing def}
\langle\ ,\ \rangle\colon  C_c^F((\A_K^S)^\times,\bar\Q)\times \cA^S(A_\Pi(\C_v)\otimes\Q)\lra  A_\Pi(\C_v)\otimes_{L_\pi} \bar\Q,
\end{align}
given by
\begin{align*}
  \langle f, \psi \rangle =\int_{(\A_K^{S})^\times}f(t)\psi(t) dt^\times
\end{align*}
where $dt^\times$ is the Haar measure normalized to give volume $1$ to the maximal compact subgroup of $(\A_K^S)^\times$. Observe that since $f$ is locally constant and compactly supported modulo $(\A_F^S)^\times$, the integral is actually a finite sum. This induces, by cap product, a pairing
\begin{align*}
   H_{r-1}(K^+,C_c^F((\A_K^S)^\times,\bar\Q))\times H^{r-1}(K^+,\cA^S(A_\Pi(\C_v)\otimes\Q))\stackrel{\langle\langle\cdot,\cdot\rangle\rangle}{\lra}  A_\Pi(\C_v)\otimes_{L_\pi} \bar\Q.
\end{align*}
\begin{definition}
  Given $\chi\in C(\cG,\bar\Q)$, denote by $\chi_\lambda\in H^0(K^+,C(T(\A^S),\bar\Q)$ its $\lambda$-component  as in Equation~\eqref{eq:functions with compact support}. For $f\in \rho|_{G(\A^S)}$, we define the Darmon point $P_\lambda(\chi,f)\in A_\Pi (\C_v)\otimes_{L_\pi} \bar\Q$ as
  \begin{align*}
    P_\lambda(\chi,f)= \langle\langle\xi \cap \chi_\lambda, \Psi_K(f)\rangle\rangle,
  \end{align*}
  where the cap product arises from the pairing in Equation~\eqref{cup-C-Cc}. 
\end{definition}
\begin{conjecture}\label{conjecture_princ}
  \begin{enumerate}
  \item (Rationality) $P_\lambda(\chi,f)$ belongs to $A_\Pi(K_S^{\text{ac}})\otimes_{L_\pi} \bar\Q$.
    \item (Reciprocity law) For any $\sigma\in \cG$ the Galois action on $P_\lambda(\chi,f)$ is given by
  \begin{align*}
    P_\lambda(\chi,f)^\sigma = P_\lambda(\chi^\sigma,f),
  \end{align*}
where 
$\chi^\sigma (\gamma)= \chi(\sigma^{-1}\gamma)$.
\item (Gross--Zagier-type formula) For any finite character $\chi\in C(\cG, \bar\Q)$ such that $\chi\circ{\rho_K}_{|_{K^\times/K^+}}=\lambda$, one has 
  \begin{align*}
    \langle P_\lambda(\chi,f),P_\lambda(\chi,g) \rangle_{NT}= c \cdot L'(1/2,\pi,\chi)\cdot \prod_{u\not \in S}\alpha_{\pi_u,\chi_u}(f_u\otimes g_u),
  \end{align*}
where  $\langle\ ,\ \rangle_{NT}$ is the canonical Neron--Tate height pairing on $A_\Pi$, the quantity $c$ is a positive constant that depends on the various choices made in the construction, and  $\alpha_{\pi_u,\chi_u}(f_u\otimes g_u)$ is as defined in~\cite{Zhang}.
  \end{enumerate}
\end{conjecture}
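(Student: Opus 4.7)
The plan is to treat the three statements in order, reducing each to a known case whenever possible and isolating what new input is actually required.

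First, I would check the conjecture in the case that $K/F$ is CM: here $v$ must be archimedean (since no finite non-split prime can be in $S_{\ns}$ for a CM extension once the parity is right), the fixed point $z\in\Delta_v$ lies in the genuine upper half plane, and the Appendix already carries out the dictionary identifying $\Psi_K$ with the usual modular symbol built from $\pi$, $\xi\cap\chi_\lambda$ with the class of a CM cycle, and the pairing $\langle\langle\cdot,\cdot\rangle\rangle$ with the integration of a cusp form against a Hecke character. Under this identification $P_\lambda(\chi,f)$ becomes a linear combination of classical Heegner points, so (1) is the theorem of Shimura, (2) is Shimura's reciprocity law, and (3) is the Gross--Zagier--Yuan--Zhang--Zhang formula. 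This serves both as a sanity check and as a blueprint for the general strategy.

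For (1) and (2) in general, the plan is to upgrade the class $\Psi\in H^{r-1}(G(F)^+,\cA^S(\Delta_v,A_\Pi(\C_v)\otimes\Q))^\lambda_\pi$ to a class taking values in $A_\Pi(K_S^{\mathrm{ab}})\otimes\Q$ on the subspace generated by $K^\times$-fixed divisors. Concretely, I would factor the construction of Section 3: Proposition 3.3 produces a canonical $\phi$ over $\cO_v$, and Conjecture 3.6 promotes the cokernel $\Hom(X,\C_v^\times)/Y$ to an algebraic abelian variety. Combining these, the idea is to show that the restriction $\Psi_K$, viewed through Shapiro's lemma as a class in $H^{r-1}(K^+,\cA^S(A_\Pi(\C_v)\otimes\Q))_\pi^\lambda$, in fact lies in the image of $H^{r-1}(K^+,\cA^S(A_\Pi(K_S^{\mathrm{ab}})\otimes\Q))_\pi^\lambda$ under the natural map. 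Once this is in place, the pairing~\eqref{eq: pairing def} against the cap product $\xi\cap \chi_\lambda$ lands in $A_\Pi(K_S^{\mathrm{ab}})\otimes_{L_\pi}\bar\Q$ by finiteness of the support; (2) then follows formally because $\xi$ is $T(F)^+$-invariant, the twist $\chi\mapsto\chi^\sigma$ matches the Artin map $\rho_K$ under~\eqref{eq:functions with compact support}, and the pairing is $T(F)$-equivariant.

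For (3) the plan is to follow the Waldspurger--Yuan--Zhang--Zhang template. One decomposes the toric integral defining $P_\lambda(\chi,f)$ as a product of local factors at all finite split places and an archimedean/$v$-adic period, applies the seesaw identity to $\langle P_\lambda(\chi,f),P_\lambda(\chi,g)\rangle_{NT}$, and matches the archimedean/$v$-adic contribution to $L'(1/2,\pi,\chi)$ via a height formula for the class $\Psi$. The split local factors $\alpha_{\pi_u,\chi_u}$ are precisely the local matrix coefficient integrals of Zhang. The constant $c$ absorbs the normalizations of Haar measures on $(\A_K^S)^\times$, the choice of generator $\varphi$ in Proposition 3.3, and the choice of $\zeta\in H_{r-1}(\Gamma,\Z)$.

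The main obstacle, as in every known instance of the Darmon-point conjectures, is part (1). The class $\Psi$ is defined cohomologically, and outside the cases listed after Conjecture 3.6, namely Shimura curve uniformization, Oda's theorem, and Gehrmann's $\mathcal{L}$-invariant work in the ordinary $p$-adic setting, no geometric construction is known which would force the image of $\Psi_K$ to be algebraic. Any general approach will have to either strengthen Conjecture 3.6 to an \emph{equivariant} isogeny compatible with a $K_S^{\mathrm{ab}}$-structure, or interpolate the conjecture from CM points along a Hida family or an eigenvariety passing through $\pi$. By contrast, once (1) is granted, (2) is largely formal, and (3) is a separate but well-understood analytic problem.
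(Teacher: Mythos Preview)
The statement is a \emph{conjecture}, and the paper does not prove it in general. The only ``proof'' the paper offers is the verification in the Appendix for the CM case $K/F$, where parts (1) and (2) reduce to Shimura's reciprocity law and part (3) to the Gross--Zagier--Zhang formula. Your proposal is likewise not a proof: it is a strategic outline which, to its credit, explicitly acknowledges that part (1) is the genuine obstacle and is open outside the known uniformization settings.

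Your CM reduction matches the paper's Appendix in spirit, but contains one inaccuracy: you assert that in the CM case $v$ must be archimedean. This is false. The Appendix treats \emph{both} the archimedean case ($v\in\Sigma_{\ns}$, \S5.1) and the non-archimedean case ($v=w\in S_{\ns}$ a finite Steinberg prime inert or ramified in $K$, \S5.2, via \v{C}erednik--Drinfeld uniformization). In the latter, the Darmon points are again classical Heegner points seen through the $p$-adic uniformization of the Shimura curve, and (1)--(3) are known by the same theorems.

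For the general case, your plan for (1) --- showing that $\Psi_K$ lands in the image of $H^{r-1}(K^+,\cA^S(A_\Pi(K_S^{\mathrm{ab}})\otimes\Q))^\lambda_\pi$ --- is not a proof step but a restatement of the conjecture: there is no known mechanism producing such algebraicity outside the Shimura-curve/Oda/$\cL$-invariant cases you list. Your claim that (2) is ``largely formal'' once (1) is granted is also optimistic; even assuming rationality, identifying the Galois action with the adelic shift requires a reciprocity law, which in the non-CM setting is itself conjectural. The paper makes no attempt at (2) or (3) beyond the CM case either, so on this point you and the paper agree: these remain open.
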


\section{Appendix: Comparison with classical Heegner points}\label{appendix}

We assume that $F$ is a totally real number field and $K/F$ is a totally imaginary quadratic extension. Thus, with the notation of \S \ref{D-P}, we have that $\Sigma_{\rm ns}=\{w\mid \infty\}$. We choose a set of finite places $S_{\rm ns}$ as in \S \ref{D-P} such that the cardinality of $\Sigma_{\rm ns}\cup S_{\rm ns}$ is odd. We will proceed to compare the construction given above to the classical constructions of Heegner points.

\subsection{Archimedean uniformization of Heegner points}
 We choose $v\in \Sigma_{\rm ns}$ and let $G$ be the multiplicative group of the quaternion algebra with ramification set $(\Sigma_{\rm ns}\cup S_{\rm ns})\setminus\{v\}$. Let $\pi$ be the Jacquet-Langlands lift to $G$ of the parallel weight 2 cuspidal automorphic representation $\Pi$. Note that, in this case, there is only one possible choice of signs $\lambda$, and $\pi\mid_{G(\A_F^\infty)}$ is generated by $\phi\in H^0(G(F),\cA^{v}(X\otimes D_v,\C))^U$ where, as above, $X=\Hom(R_\pi,\Z)$ and $U\subset G(\A_F^\infty)$ is an open compact subgroup. Since $X\otimes\C\simeq \Hom(L_\pi,\C)$, we have an isomorphism
\[
H^0(G(F),\cA^{v}(X\otimes D_v,\C))\longrightarrow\prod_{\sigma:L_\pi\hookrightarrow\C}H^0(G(F),\cA^{v}(D_v,\C));\qquad \phi\longmapsto (\phi^\sigma)_\sigma, 
\] 
where $\phi^\sigma(f)=\phi(\sigma\otimes f)$, for all $f\in D_v$. Each $\phi^\sigma$ provides an holomorphic differential form $\omega^\sigma$ of a Shimura curve $X_U/F$, whose set of $\C$-valued points is in correspondence with the double coset space
\begin{equation}\label{Cparam}	
X_U(\C)=G(F)^+\backslash \cH\times G(\A_F^\infty)/U=\bigsqcup_{\gamma\in G(F)^+\backslash G(\A^\infty)/U}\Gamma_\gamma\backslash\cH,
\end{equation}
where $\Gamma_\gamma=\gamma U\gamma^{-1}\cap G(F)^+$. More explicitly, $\omega^\sigma(\gamma,z)=(2\pi i f_2)^{-1}\phi^\sigma(f_2)(g_v,\gamma)dz$, where $g_v\in G(F_v)$ satisfies $g_v i=z\in\cH$, and $f_2\in D_v$ is the holomorphic vector $f_2\mbox{\tiny$\left(\begin{array}{cc}a&b\\c&d\end{array}\right)$}=(ad-cb)(ci+d)^{-2}$.
By \cite[Theorem 2.4]{molina}, the connection homomorphisms
\[
\xymatrix{
H^0(G(F),\cA^{v}(X\otimes D_v,\C))\ar[d]^{\simeq}\ar[r]^{\delta_v^\pm}&H^1(G(F)^+,\cA^{v}(X,\C))\ar[d]^{\simeq}\\
\prod_{\sigma:L_\pi\hookrightarrow\C}H^0(G(F),\cA^{v}(D_v,\C))\ar[r]^{(\delta_\sigma^\pm)_\sigma}&\prod_{\sigma:L_\pi\hookrightarrow\C}H^1(G(F)^+,\cA^{v}(\C))
}
\]
sends $(\phi^\sigma)_\sigma$ to the classes $(\delta^\pm_\sigma(\phi))_\sigma$ defined by the cocycle 
\[
\lambda\longmapsto c_\lambda(\gamma)=\int_{\lambda z_0}^{z_0}\omega^\sigma(\gamma,z)\pm \int_{\lambda z_0}^{z_0}\overline{\omega^\sigma(\gamma,z)},\qquad \lambda\in\Gamma_\gamma.
\]
We can identify $H^1(G(F)^+,\cA^{v}(\C))^{UG(F_{\infty\setminus v})}=\bigoplus_{\gamma\in G(F)^+\backslash G(\A_F^\infty)/U} H^1(\Gamma_\gamma,\C)$ with the singular cohomology of $X_U$, hence the morphism $\delta_\sigma^++\delta_\sigma^-$ provides the comparison isomorphism between singular and deRham cohomologies.

If we assume that the differentials $\omega_\sigma$ are defined over $F$, then we have that $\delta_v^\pm(\phi)=\Omega^{\pm}\tilde\phi^\pm$, where $\phi^\pm\in H^1(G(F)^+,\cA^v(X,\Z))^\pm$ are  modular symbols in the $\pm1$-component under the action of complex conjugation, and $\Omega^\pm\in R_\pi\otimes_\Z\C$. Following the construction of Proposition \ref{prop:latticeTrivial}, the lattice of the abelian variety $A_\Pi$ in this case is $Y=R_\pi+\tau R_\pi$, where $\tau=\Omega^-(\Omega^+)^{-1}$, which is commensurable to $\Omega^+R_\pi+\Omega^-R_\pi\hookrightarrow\Hom(X,\C)$. It is clear that the complex torus $\Hom(X,\C)/\left(\Omega^+R_\pi+\Omega^-R_\pi\right)$ defines $A_\Pi$ by the previous description of $\delta_\sigma^\pm$, hence Conjeture \ref{conj: oda} holds in this case (see \cite[\S 5.3]{Santi}).

By \cite[\S 4.1]{Santi}, the evaluation morphisms
\[
\xymatrix{
H^0(G(F),\cA^{v}(X\otimes D_v,\C))\ar[d]^{\simeq}\ar[r]^{{\rm ev}_v^\pm}&H^0(G(F)^+,\cA^{v}(X\otimes\Delta^0_v,\C))\ar[d]^{\simeq}\\
\prod_{\sigma:L_\pi\hookrightarrow\C}H^0(G(F),\cA^{v}(D_v,\C))\ar[r]^{({\rm ev}_\sigma^\pm)_\sigma}&\prod_{\sigma:L_\pi\hookrightarrow\C}H^0(G(F)^+,\cA^{v}(\Delta^0_v,\C))
}
\]
satisfy
\[
({\rm ev}_\sigma^++{\rm ev}_\sigma^-)(\phi^\sigma)(z_1-z_2)(\gamma)=\int_{z_2}^{z_1}\omega^\sigma(\gamma,z),\qquad z_1,z_2\in\cH.
\]
This implies that a pre-image modulo $Y$ in $H^0(G(F)^+,\cA^v(\Delta_v,A_\Pi(\C)))$ can be chosen to be $(z,g^v)\mapsto\Phi([z,\iota^\infty(g^v)U])$, where $\iota^\infty: G(\A_F^v)\rightarrow G(\A_F^\infty)$ is the natural projection, $[z,\iota^\infty(g^v)U]\in X_U(\C)$ is the corresponding point in the Shimura curve provided by the double coset space description \eqref{Cparam}, and
\[
\Phi:X_U(\C)\longrightarrow {\rm Jac}(X_U)(\C)\longrightarrow A_\Pi(\C),
\]
is the modular parametrization over $F$ given by a suitable multiple of the Hodge class. This defines an element (see \cite[\S 5.3]{Santi})
\[
\Psi\in H^0(G(F)^+,\cA^v(\Delta_v,A_\Pi(\C)\otimes\Q))_\pi;\qquad \Psi(\phi)(z,g^v)=\Phi([z,\iota^\infty(g^v)U]).
\]

Evaluating at $z_K\in \cH$, the unique element fixed by $K^\times$, we obtain
\[
\Psi_K\in H^0(K^\times,\cA^v(A_\Pi(\C)\otimes\Q))_\pi;\qquad\Psi_K(\phi)(g^v)=\Phi([z_K,\iota^\infty(g^v)U]).
\]
\emph{Shimura's reciprocity law} asserts that $[z_K,\iota^\infty(g^v)U]\in X_U(K^{\rm ab})$ and 
\[
[z_K,\iota^\infty(g^v)U]^{\rho_K(k)}=[z_K,k\iota^\infty(g^v)U],\qquad k\in (\A_K^\infty)^\times/K^\times,
\]
where $\rho_K:(\A_K^\infty)^\times/K^\times\rightarrow\Gal(K^{\rm ab}/K)$ is the Artin map.

Notice that, in this case, the homology class $\xi\in H_0(K^\times,C_c^0(T(\A_F^v),\Z))$ is the class of the characteristic function $1_{\cF}\in C_c^0(T(\A_F^v),\Z))$. Thus, for $f=h^v\phi$ with $h^v\in G(\A^v)$ and $\chi\in C(\cG,\bar\Q)$
\[
P(\chi,f)
=\int_{\cF}\chi(\rho_K(t))\Psi_K(h^v\phi)(t)dt^\times=\int_{(\A_K^\infty)^\times/K^\times}\chi(\rho_K(t))\Phi([z_K,t\iota^{\infty}(h^v)U])dt^\times.
\]
We conclude that parts $(1)$ and $(2)$ of Conjecture \ref{conjecture_princ} are true in this case and follow from Shimura's reciprocity law. Moreover, part $(3)$ also holds by the Gross--Zagier--Zhang formula \cite[Theorem 1.3.1]{YZZ}.

\subsection{Non-archimedean uniformization of Heegner points}

Let $w\in S_{\rm ns}$ be a finite prime of $F$. By the choice of $S_{\rm ns}$, we have that 
\begin{itemize}
\item $\Pi_{w}$ is Steinberg.

\item $w$ does not split in $K$.
\end{itemize} 

Let $G'$ be the multiplicative group of the quaternion algebra with ramification set $(\Sigma_{\rm ns}\cup S_{\rm ns})\setminus\{w\}$. By the Cerednik-Drinfeld uniformization of the Shimura curve $X_U$, we have that
\begin{equation}\label{p-adicparam}
X_U(K_{w})=G'(F)\backslash \cH_{w}\times G(\A_F^{\infty,w})/U^{w}=\bigsqcup_{\gamma\in G'(F)\backslash G(\A_F^{\infty,w})/U^{w}}\Gamma_\gamma'\backslash\cH_{w};
\end{equation}
where $\cH_{w}=\cH_{w}(K_{w})=\PP^1(K_w)\setminus\PP^1(F_{w})$, $U^{w}:=U\cap G(\A_F^{\infty,w})$ and $\Gamma_\gamma'=\gamma U^{w}\gamma^{-1}\cap G'(F)$.  By \cite[Theorem 4.9]{lennart-felix}, the Jacobian of $X_U$ satisfies
\[
\Jac(X_U)(K_{w})=\bigoplus_{\gamma\in G'(F)\backslash G(\A_F^{\infty,w})/U^{w}}H^1(\Gamma_\gamma',K_{w}^\times)/L_{\Gamma_\gamma'},
\]
where $L_{\Gamma_\gamma'}$ is the image of the map 
\[
H^0(\Gamma_\gamma',\Hom({\rm St}_{w}^\Z,\Z))\longrightarrow H^0(\Gamma_\gamma',\Hom({\rm St}_{w}^{K_{w}^\times},K_{w}^\times))\longrightarrow H^1(\Gamma_\gamma',K_{w}^\times)
\]
given by Remark \ref{rmk:Xint} and the cup product by $c_{{\rm id},w}$. Since 
\begin{eqnarray*}
\bigoplus_{\gamma\in G'(F)\backslash G(\A_F^{\infty,w})/U^{w}}H^1(\Gamma_\gamma',K_{w}^\times)&=&H^1(G'(F),\cA^{w}(K_{v'}^\times))^{U^{w}G(F_\infty)},\mbox{ and}\\
 \bigoplus_{\gamma\in G'(F)\backslash G(\A_F^{\infty,w})/U^{w}}H^0(\Gamma_\gamma',\Hom({\rm St}_{w}^\Z,\Z))&=&H^0(G'(F),\cA^{w}({\rm St}_{w}^\Z,\Z))^{U^{w}G(F_\infty)},
\end{eqnarray*}
we deduce that there is a Hecke equivariant isogeny between $\Jac(X_U)(K_{w})$ and the cokernel of the morphism
\begin{equation}\label{intuniv}
\mint(\cdot)\cup c_{{\rm id},w}:H^0(G'(F),\cA^{w}({\rm St}_{w}^\Z,\Z))^{U^{w}G(F_\infty)}\longrightarrow H^1(G'(F),\cA^{w}(K_{w}^\times))^{U^{w}G(F_\infty)}.
\end{equation}
The automorphic form in $ H^0(G'(F),\cA^{w}({\rm St}_{w}^{\Z},R_\pi))^{U^{w}G'(F_\infty)}$ generating $\pi\mid_{G'(\A_F^{\infty,w})}$, defines $\phi'\in H^0(G'(F),\cA^{w}(X\otimes{\rm St}_{w}^\Z,\Z))^{U^{w}G'(F_\infty)}$. Since $X$ is a free $\Z$-module of rank $[R_\pi:\Z]$, $\phi'$ gives rise to a $\Z$-module in $H^0(G'(F),\cA^{w}({\rm St}_{w}^\Z,\Z))^{U^{w}G'(F_\infty)}$ of rank $[R_\pi:\Z]$. This module is generated by $[R_\pi:\Z]$ harmonic cocycles $\phi_i'\in H^0(G'(F),\cA^{w}({\rm St}_{w}^\Z,\Z))$ obtained from a choice of a basis of $X$. 
The image of this module through \eqref{intuniv} defines the lattice of $A_\Pi$, thus Conjecture \ref{conj: oda} also holds in this setting.

The evaluation morphism ${\rm ev}_{w}$ provides $\psi\in H^0(G'(F),\cA^{w}(X\otimes\Delta_w^0,K_{w}^\times))$ obtained, for a given basis of $X$, by means of the multiplicative integrals
\[
(z_1-z_2)\longmapsto\left(\mint_{z_2}^{z_1}\phi_i'\right)_i
\]
This implies again that a pre-image modulo $Y$ in $H^0(G'(F),\cA^{w}(\Delta_w,A_\Pi(K_{w})))$ can be chosen to be $(z,g^{w})\mapsto\Phi([z,\iota^{\infty}(g^{w})U^{w}])$, where $\iota^{\infty}: G(\A_F^{w})\rightarrow G(\A_F^{\infty,w})$ is the natural projection, and $[z,\iota^\infty(g^w)U^{w}]\in X_U(K_{w})$ is the corresponding point in the Shimura curve provided by the double coset space description \eqref{p-adicparam}.

Evaluating at $z_K\in \cH_{w}$ fixed by $K^\times$ as above, we obtain 
\[
\Psi_K'\in H^0(K^\times,\cA^{w}(A_\Pi(K_{w})\otimes\Q))_\pi;\qquad\Psi_K'(\phi)(g^{w})=\Phi([z_K,\iota^\infty(g^{w})U^{w}]),
\]
whose image is the set of $p$-adic uniformizated Heegner points. 
Parts $(1)$ and $(2)$ of Conjecture \ref{conjecture_princ} are well known in this case (see \cite[Lemma 4.2]{B-D2}), and part $(3)$ follows from the classical Gross--Zagier formula.

\bibliographystyle{halpha}
\bibliography{Anticyclotomic}

\end{document}